\documentclass[10pt,reqno,oneside]{amsart}

\usepackage[a4paper,margin=1.1in]{geometry}
\usepackage{amsmath,amssymb,amsthm,mathtools,mathrsfs}
\usepackage{thmtools}
\usepackage{enumitem}
\usepackage{hyperref}
\usepackage[nameinlink]{cleveref}
\AddToHook{cmd/appendix/before}{%
  \crefalias{section}{appendix}%
}
\usepackage{microtype}
\usepackage[dvipsnames]{xcolor}
\usepackage{comment}

\makeatletter
\renewcommand{\subsection}{\@startsection{subsection}{2}%
  \z@{\linespacing\@plus.7\linespacing}{.5\linespacing}%
  {\normalfont\scshape}}
\makeatother

\hypersetup{colorlinks=true,linkcolor=blue,citecolor=blue,urlcolor=blue}

\newcommand{\R}{\mathbb{R}}

\newcommand{\N}{\mathbb{N}}

\newcommand{\dd}{\,\mathrm{d}}
\newcommand{\p}{\partial}
\newcommand{\supp}{\operatorname{supp}}
\newcommand{\abs}[1]{\left|#1\right|}
\newcommand{\norm}[1]{\left\|#1\right\|}
\newcommand{\one}{\mathbf{1}}
\newcommand{\loc}{\operatorname{loc}}
\newcommand{\eps}{\varepsilon}

\newtheorem{theorem}{Theorem}
\newtheorem{proposition}[theorem]{Proposition}
\newtheorem{lemma}[theorem]{Lemma}
\newtheorem{corollary}[theorem]{Corollary}
\newtheorem{remark}[theorem]{Remark}
\newtheorem{definition}[theorem]{Definition}

\title[]{Blow-up for a Semilinear Tricomi-type Equation with Scale-Invariant Mass in the Oscillatory Regime}

\author{Diego Marcon, Wanderley Nunes do Nascimento, Matheus C. Santos}

\thanks{%
\begin{tabular}{@{}l}
Instituto de Matemática e Estatística, Universidade Federal do Rio Grande do Sul, Porto Alegre, Brazil. \\
\end{tabular}}

\date{}

\begin{document}

\begin{abstract}
We investigate the finite-time blow-up of solutions to a Tricomi-type equation with scale-invariant potential and power nonlinearities in the oscillatory regime. For smooth, compactly supported, nonnegative initial data, we prove nonexistence of global-in-time solutions when the power nonlinearity lies below the positive root of an explicit Strauss-type polynomial naturally associated with the equation. The proof combines two main ingredients. The first is the construction of a positive adjoint temporal profile, which yields a weighted monotonicity formula and, consequently, a quantitative lower bound for the nonlinear term. The second is a phase-localized test function argument on logarithmic time shells, fitted to capture the oscillatory effects induced by the scale-invariant potential and to derive a complementary upper bound for the same quantity. The existence of global solutions when the power nonlinearity is equal to the polynomial root is still an open problem.

\bigskip

\noindent{\it 2020 Mathematics Subject Classification:} 35B44, 35Q05, 35L15, 35L71

\noindent\textit{Keywords:} Blow-up, Generalized Tricomi equation, Scale-invariant mass, Oscillatory regime.
\end{abstract}

\maketitle

\section{Introduction}

In this paper, we study finite-time blow-up for weak solutions of the Cauchy
problem
\begin{equation}\label{eq:intro-main}
\begin{cases}
u_{tt}-(1+t)^{2\ell}\Delta u+\dfrac{\mu^2}{(1+t)^2}u=|u|^p,
& (x,t)\in\mathbb R^n\times(0,T),\\[1em]
u(x,0)=\varepsilon f(x),\qquad
u_t(x,0)=\varepsilon g(x),
& x\in\mathbb R^n,
\end{cases}
\end{equation}
where $n\ge1$,$-1<\ell<0$, $\mu^2>\frac14$, $p>1$, and $\varepsilon>0$.

The initial data are assumed to be nonnegative, compactly supported, and not
both identically zero.  The coefficient $(1+t)^{2\ell}$ describes a
decreasing propagation speed which leads the differential operator to have the shifted Tricomi structure $\partial_t^2-(1+t)^{2\ell}\Delta$, while the term $\mu^2(1+t)^{-2}u$ is a
scale-invariant time-dependent mass. In this sense, \eqref{eq:intro-main} is a shifted
Tricomi-type wave equation with scale-invariant mass.

The results of this work are motivated by the theory for the classical semilinear wave equation
\begin{equation}\label{eq:intro-classical-wave}
v_{tt}-\Delta v=|v|^p,
\qquad (x,t)\in\mathbb R^n\times(0,T).
\end{equation}
For $n\ge 2$, this model exhibits a critical threshold $p_*=p_{\mathrm{Str}}(n)$,
now known as the Strauss exponent. It is defined as the positive root of
\begin{equation}\label{eq:strauss-polinomio}
(n-1)p^2-(n+1)p-2=0.    
\end{equation}
The Strauss exponent separates the small-data dynamics into finite-time
blow-up in the subcritical and critical range $1<p\le p_{\mathrm{Str}}(n)$, and
global existence in the supercritical range $p>p_{\mathrm{Str}}(n)$.

The blow-up side, together with the identification of the expected
threshold, goes back to classical works \cite{John1979,Schaeffer1985,Sideris1984,Strauss1981}. The corresponding global existence theory in the supercritical range, as well as the resolution of the critical case, was completed through a long series
of contributions, including
\cite{Glassey,Kubo,LindbladSogge1996,TakamuraWakasa2011,YZ2006,Zhou95,Zhou2007,ZhouHan2014}.

A large part of the modern theory concerns variations of
\eqref{eq:intro-classical-wave} by scale-invariant lower-order terms such as
\begin{equation}\label{eq:intro-general-damping-mass}
w_{tt}-(1+t)^{2\ell}\Delta w
+\dfrac{\beta}{1+t}\, w_t
+\dfrac{\mu^2}{(1+t)^2}w
=|w|^p,
 \quad (x,t)\in\mathbb R^n\times(0,T).
\end{equation}
This equation belongs to the Euler-Poisson-Darboux-Tricomi or generalized Tricomi-type family.  The equation
\eqref{eq:intro-general-damping-mass}, and closely related models, has been
studied extensively from the viewpoints of global existence, blow-up and
lifespan estimates; see, among others,
\cite{HHHT22,DLR2015,FanLaiTakamura2026,LaiPalmieriTakamura2026,LaiTakamuraWakasa2017,TuLin2019Damping,NPR2017,Palmieri2025,
PR2019,PalTu20219,Wakasugi2014}.
These works show that the linear behavior of \eqref{eq:intro-general-damping-mass} is governed by
the indicial discriminant
\[
\delta:=(1-\beta)^2-4\mu^2.
\]
Indeed, the spatially homogeneous part of the corresponding linear equation
has the following Cauchy--Euler structure
\begin{eqnarray} \label{CauchyEulerODE}
h''(t)+\frac{\beta}{1+t}h'(t)
+\frac{\mu^2}{(1+t)^2}h(t)=0.
\end{eqnarray}
We look for solutions of the form $h(t)=(1+t)^r$, and obtain the indicial
equation
\[
r^2+(\beta-1)r+\mu^2=0,
\]
whose discriminant is precisely $\delta = (1-\beta)^2-4\mu^2$.  Thus,
$\delta\ge0$ corresponds to real indicial roots which imply
power-like profiles, whereas $\delta<0$ corresponds to complex indicial
roots and, therefore, logarithmically oscillatory profiles.

In the regimes treated in the existing scale-invariant damping and mass
literature, most available results treat the nonoscillatory range $\delta\ge0$. 
For the model \eqref{eq:intro-general-damping-mass}, we also observe a
critical-exponent phenomenon, whose value depends
on the strength of the scale-invariant lower-order terms.  In the purely wave
case with scale-invariant damping, critical exponents, global existence,
blow-up, and lifespan estimates are studied in
\cite{DLR2015,LaiTakamuraWakasa2017,Wakasugi2014}. The additional presence
of a scale-invariant mass is treated in
\cite{TuLin2019Damping,NPR2017,PR2019,PalTu20219}, where the interaction
between damping and mass is shown to affect both the effective dimension and
the relevant blow-up threshold.  For
Euler-Poisson-Darboux-Tricomi analogues, related critical and subcritical
problems are investigated in
\cite{HHHT22,ChLuPa2021,FanLaiTakamura2026,HeWiY20172,HeWiY2017,HeWiY2020,HeWiY2021,LaiPalmieriTakamura2026,LinTu2019Tricomi,Palmieri2025}.

Still in the nonoscillatory range, this critical behavior is not
governed by a single mechanism.  Rather, there is a competition between a
heat-like phenomenon and a wave-like phenomenon.  The heat-like side refers
to the diffusive behavior familiar from the semilinear heat equation, for
which Fujita identified the critical power $p_F(n)=1+2/n$; see
\cite{Fujita1966}. 
In damped wave models, sufficiently effective damping drives the
large-time behavior toward a diffusive, parabolic regime. Consequently,
the critical threshold may be governed by a Fujita-type exponent
associated with the parabolic asymptotic profile. On the other hand,
when the damping is not strong enough to suppress the hyperbolic character
of the equation, the wave-like mechanism dominates and the relevant threshold
is of Strauss type.  This competition is made explicit in the
scale-invariant damping and mass setting in \cite{PR2019} and is consistent
with the blow-up and lifespan estimates obtained in
\cite{DLR2015,LaiTakamuraWakasa2017,TuLin2019Damping,NPR2017,Palmieri2025,PalTu20219,Wakasugi2014}. 

A common tool in the blow-up analysis of these wave-like regimes is the use
of a solution of the adjoint homogeneous equation.  For
\eqref{eq:intro-general-damping-mass}, the formal adjoint equation is
\[
\psi_{tt}-(1+t)^{2\ell}\Delta\psi
-\left(\frac{\beta}{1+t}\psi\right)_{\!t}
+\frac{\mu^2}{(1+t)^2}\psi=0.
\]
If we look for a solution with separated variables of the form
\[
\psi(t,x)=m_\lambda(t)\varphi_\lambda(x),
\]
where $\varphi_\lambda$ is the spherical eigenfunction of the Laplacian, see \cite{YZ2006}; then, the temporal factor $m_{\lambda}$  solves
\begin{equation}\label{eq:intro-adjoint-ode}
m_\lambda''(t)
-\frac{\beta}{1+t}m_\lambda'(t)
+\left(
\frac{\beta+\mu^2}{(1+t)^2}
-\lambda^2(1+t)^{2\ell}
\right)m_\lambda(t)=0.
\end{equation}
This construction,
or close variants, lies behind many test-function proofs of blow-up.
In the classical wave equation, the mode
$e^{-t}\varphi_1(x)$ is used in
\cite{YZ2006} to prove critical blow-up.  Related adjoint-mode or positive-test-function
arguments appear in 
\cite{FanLaiTakamura2026,LaiPalmieriTakamura2026,LaiTakamuraWakasa2017,TuLin2019Damping,Palmieri2025,PalTu20219,TakamuraWakasa2011,WakaYor19,Zhou2007,ZhouHan2014}.

The present paper is devoted to the complementary oscillatory regime
$\delta<0$ which is not covered by most of the existing literature.  In view of \Cref{cor:EPDT-equation}, it suffices to consider
the reduced case $\beta=0$.  With this normalization, we have
\[
\delta<0
\quad\Longleftrightarrow\quad
\mu^2>\frac14.
\]
In this case, the associated Cauchy-Euler equation \eqref{CauchyEulerODE}, with $\beta =0$, has solutions of the form
\[
h(t)
=
(1+t)^{1/2}
\Big[
c_1\cos\!\left(\omega\log(1+t)\right)
+
c_2\sin\!\left(\omega\log(1+t)\right)
\Big],
\quad \text{with } \quad 
\omega:=\sqrt{\mu^2-\frac14} > 0.
\]
Thus, the homogeneous dynamics changes sign on logarithmic time scales and
cannot be used globally as positive test functions.  This
loss of positivity prevents the direct use of the usual space-average and
integral-functional methods \cite{Sideris1984}, the Kato-type comparison
argument \cite{Kato1980,LaScTa2020}, and the positive test-function methods
based on monotone time cutoffs or slicing procedures
\cite{TakamuraWakasa2011,WakaYor19,YZ2006,Zhou2007,ZhouHan2014}.

Our blow-up range is described by the Strauss-type polynomial
\begin{equation}\label{eq:strauss-ell}
\gamma(n,\ell;p)
:=
\left(n-1+\frac{\ell}{1+\ell}\right)p^2
-
\left(n+1-\frac{3\ell}{1+\ell}\right)p
-2.    
\end{equation}
We prove that nontrivial, nonnegative, compactly supported data generate
finite-time blow-up whenever $\gamma(n,\ell;p)<0$. Note that \eqref{eq:strauss-ell} reduces to \eqref{eq:strauss-polinomio} when $\ell=0$. Therefore, when the coeficient of $p^2$ is positive, this polinomial has only one positive root and hence we denote it by $p_{\mathrm{Str}}(n,\ell)$. Moreover, this critical exponent coincides with the one appearing for the semilinear generalized Tricomi equation with power-type nonlinearity \cite{HeWiY20172, HeWiY2017,HeWiY2020,HeWiY2021, LinTu2019Tricomi,Su2021}. This suggests that the presence of the mass term in the oscillatory regime does not produce substantial effects on the blow-up threshold.

The method has two main components.  First, we construct a positive monotone temporal
solution of the separated adjoint equation \eqref{eq:intro-adjoint-ode}, with $\beta=0$,
and combine it with the spherical eigenfunction
$\varphi_\lambda$ as is standard.  This adjoint mode provides the lower estimate for the
nonlinear term and replaces the positivity that is lost at the level of the
space average.
Second, the complementary upper estimate is obtained by a different and genuinely oscillatory test. We test the weak formulation in convenient logarithmic time phases adapted to the oscillations of the Cauchy–Euler solution. On these phases, the oscillatory temporal
profile has a controlled sign and size, and the upper bound can be compared
with the lower bound obtained from the positive adjoint mode. This comparison naturally leads to the appearance of the  Strauss-type polynomial $\gamma(n,\ell;p)$ defined in \eqref{eq:strauss-ell}.

To the best of our knowledge, the use of logarithmic phase-localized
test functions for estimating solutions has not previously been used in the
blow-up theory of wave equations.  This is the main
novelty of the paper and the mechanism that allows us to treat the
oscillatory regime $\mu^2>1/4$.  The borderline case
$\gamma(n,\ell;p)=0$ remains open.

\medskip

The paper is organized as follows. In \Cref{sec:preliminaries}, we fix the notation, introduce the notion of finite-propagation weak solution and state the main blow-up theorem with its consequences. In \Cref{sec:lower-bound}, we present a positive and decreasing temporal profile for the separated adjoint equation and its large-time behavior. Combining this profile with an eigenfunction of the spherical Laplacian, we derive a monotonicity formula together with a pointwise lower bound for the nonlinear term. In \Cref{sec:log-shells}, we introduce the logarithmic time shells adapted to the oscillations of the Cauchy--Euler profile and prove the corresponding shell-localized lower and upper bounds. Finally, in \Cref{sec:proof-main}, we compare the lower and upper shell estimates and complete the proof of the main theorem, including the stated lifespan estimate. \Cref{appendixA} is devoted to the proof of the existence of the adjoint profile and the analysis of its asymptotic behavior.

\section{Notations and Main Results}\label{sec:preliminaries}

This section introduces the notation, definitions, and main results of the paper. We first fix the auxiliary quantities associated with the time-dependent propagation speed, the weighted Strauss-type polynomial, and the spatial test functions employed in the blow-up argument. We then formulate the notion of finite-propagation weak solution adapted to the weighted problem and state the main nonexistence and lifespan result, together with its consequences for the unweighted equation and for the corresponding damped model in the oscillatory regime.

\subsection{Definitions and Notations}

We begin by collecting the notation and auxiliary objects that will be used throughout the paper.  These include the Strauss-type polynomials governing the critical ranges of the exponent, the spatial and temporal weights entering the test-function method, and some standard conventions for comparison estimates.

\begin{enumerate}[label=$(\mathrm{D}\arabic*)$]
    \item (Weighted Strauss polynomial) For $n,\ell\in\R$ with $\ell\neq-1$ and for any $\alpha\ge 0$ we define the weighted Strauss-type polynomial on $p$ by\begin{equation}\label{eq:gamma}
\gamma_\alpha(n,\ell;p)
=
\left(n-1+\frac{\ell}{1+\ell}\right)p^2
-
\left(
n+1-\frac{3\ell+2\alpha}{1+\ell}
\right)p
-2.
\end{equation}

    \item \label{def:smooth_cutoff}(Smooth cutoff function) Let $\zeta\in C_c^{\infty}(\R^n)$ any function such that $0\le \zeta\le1$, $\zeta\equiv1$ on $B_1$ and $\supp\zeta\subset B_2$, and, for any $R>0$, set
\[\zeta_R(x):=\zeta\!\left(\frac{x}{R}\right)\]

    \item (Velocity function) We set
\[
a(t):=(1+t)^\ell
\quad \text{and} \quad
A(t):=\int_0^t a(s)\dd s
=
\frac{(1+t)^{\ell+1}-1}{\ell+1}.
\]
Since $-1<\ell<0$, we have $a(t)>0$, $a'(t)<0$, and $A(t)\to+\infty$ as $t\to+\infty$.

    \item\label{def:varphi_lambda} (Yordanov-Zhang spherical eigenfunction) For $\lambda>0$, let $\varphi_\lambda:\R^n\to\R$ defined by
\[
\varphi_\lambda(x):=\int_{\mathbb S^{n-1}}e^{\lambda x\cdot\omega}\dd S(\omega),
\]
with the convention $\mathbb S^0=\{-1,1\}$ when $n=1$. 

    \item \label{def:H} For a fixed $p>1$ and $u\in L^p_{\loc}([0,T)\times \R^n)$, we write \[
H(t):=\int_{\R^n}|u(x,t)|^p\dd x.
\]

    \item Given two nonnegative quantities $X$ and $Y$, we write
\[
X\asymp Y
\]
if there exist positive constants $c$ and $C$, independent of the relevant variables, such that
\[
c\,Y\le X\le C\,Y.
\]
Whenever needed, the dependence of the implicit constants are specified explicitly. 

\end{enumerate}

\begin{remark}\label{rem:weak-class}
The condition  $u\in L^p_{\loc}([0,T)\times \R^n)$ ensures that $H(t)$ is finite for almost every $t\in[0,T)$ because of the support condition \eqref{eq:finite-speed-weak}.  All pointwise estimates involving $H(t)$ are understood for a.e. $t$.
\end{remark}

\subsection{Main Results}

In this paper, we investigate the finite-time blow-up of solutions to the Tricomi equation with scale-invariant mass and a weighted power-type nonlinearity
\begin{equation}\label{eq:main2}
\begin{cases}
u_{tt}-(1+t)^{2\ell}\Delta u+\dfrac{\mu^{2}}{(1+t)^{2}}u
=(1+t)^{-\alpha}|u|^{p}, 
& (x,t)\in \mathbb{R}^{n}\times(0,T),\\[0.4em]
u(x,0)=\eps f(x), \quad u_{t}(x,0)=\eps g(x), 
& x\in \mathbb{R}^{n}.
\end{cases}
\end{equation}
Here $n\geq 1$, $-1<\ell<0$, $\mu^{2}>\frac14$, $p>1$, and $\eps>0$. Although our primary interest is the Cauchy problem \eqref{eq:intro-main}, in order to establish the blow-up result we work in a slightly more general setting and allow a weighted nonlinearity with parameter $\alpha\in\mathbb{R}$. As a consequence, we also obtain a finite-time blow-up result for an open case related to the Cauchy problem associated to \eqref{eq:intro-general-damping-mass} (see \cite{Palmieri2025}).

\begin{definition}[Finite-propagation weak solutions]\label{def:weak}
Let $\mu,\ell,\alpha\in\R$ with $\ell\neq -1$ and  $\alpha\ge0$. Let $T\in(0,\infty]$.  We say that $u$ is a weak solution on $[0,T)$ of \eqref{eq:main2}
if:
\begin{enumerate}[label = $(\roman*)$]
    \item $u\in C^1([0,T);L^1_{\loc}(\R^n))\cap L^p_{\loc}([0,T)\times\R^n)$;
    \item there exists $R_0>0$ such that
$\supp f\cup \supp g\subset B_{R_0}$,
and, for all $t\in[0,T)$,
\begin{equation}\label{eq:finite-speed-weak}
\supp u(\cdot,t)\cup \supp u_t(\cdot,t)\subset B_{R_0+A(t)};
\end{equation}
    \item for every $\phi\in C_c^\infty(\R^n\times [0,T))$,
\begin{equation}\label{eq:weak-formulation}
    \begin{split}
        \int_0^T&\!\!\int_{\R^n}u(x,t)\left(
 \phi_{tt}(x,t)-(1+t)^{2\ell}\Delta\phi(x,t)
 +\frac{\mu^2}{(1+t)^2}\phi(x,t)
 \right)\dd x\dd t \\
&=\int_0^T\!\!\int_{\R^n}(1+t)^{-\alpha}|u(x,t)|^p\phi(x,t)\dd x\dd t
 +\eps\int_{\R^n}\bigl(g(x)\phi(x,0)-f(x)\phi_t(x,0)\bigr)\dd x.
    \end{split}
\end{equation}
\end{enumerate}
\end{definition}

Let us state the main result of the paper:

\begin{theorem}\label{thm:lifespan-weighted}
Assume $n\ge1$, $-1<\ell<0$, $\mu^2>\frac14$, $\alpha\ge0$ and $p>1$. Let
\[
f,g\in C_c^\infty(\R^n),
\quad
f\ge0,
\quad
g\ge0,
\quad \text{and} \quad
(f,g)\not\equiv(0,0).
\]
Assume that
\[
\gamma_\alpha(n,\ell;p)<0.
\]
Then, no global weak solution of \eqref{eq:main2} with initial data $(\varepsilon f,\varepsilon g)$ exists for any $\varepsilon>0$. 
Moreover, there exist $\varepsilon_0>0$ and $C>0$, independent of $\varepsilon$, such that
for every $0<\varepsilon\le\varepsilon_0$, if the weak solution is defined on $[0,T_\eps)$, then the lifespan $T_\eps$ must satisfy
\begin{equation}\label{eq:lifespan-weighted}
T_\varepsilon
\le
C\varepsilon^{
\frac{2p(p-1)}
{(1+\ell)\gamma_\alpha(n,\ell;p)}
}.
\end{equation}
\end{theorem}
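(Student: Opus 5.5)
The plan follows the two-ingredient strategy announced in the introduction: a lower bound for $H$ coming from a positive adjoint mode, an upper bound coming from phase-localized tests on logarithmic time shells, and a comparison of the two on a long sequence of shells.

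\emph{Step 1: positive adjoint mode and lower bound.} I first construct a positive, decreasing solution $m_\lambda$ of the separated adjoint ODE \eqref{eq:intro-adjoint-ode} with $\beta=0$ (normalized, say, with $\lambda=1$). Its behaviour is WKB-type:
\[
m(t)\asymp (1+t)^{-\ell/2}e^{-A(t)},
\]
since the term $\lambda^2(1+t)^{2\ell}$ dominates $\mu^2(1+t)^{-2}$ for large $t$ when $\ell>-1$. For a small initial interval, where the mass term may compete, I would handle positivity by an ODE comparison argument.

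Setting $\psi=m(t)\varphi_1(x)$ and testing \eqref{eq:weak-formulation} against $\psi$ times a cutoff in $t$ (the cutoff $\zeta_R$ in $x$ is harmless by \eqref{eq:finite-speed-weak}), I obtain a Wronskian-type identity. Define $F(t)=\int u\,\varphi_1\dd x$. Then $(mF'-m'F)'$ equals the nonnegative quantity $m(1+t)^{-\alpha}\int|u|^p\varphi_1$. Since $f,g\ge0$ and $m'<0$, this gives
\[
mF'-m'F\ge c\eps .
\]
Dividing by $m^2$ and integrating yields $F(t)\gtrsim\eps\, m(t)\int_0^t m^{-2}$, hence $F(t)\gtrsim \eps (1+t)^{\ell/2}e^{A(t)}/a(t)$. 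By Hölder on the support ball $B_{R_0+A(t)}$, using $\int_{B_{R}}\varphi_1^{p'}e^{-p'A}\lesssim R^{(n-1)(1-p'/2)}$, this gives the pointwise lower bound
\[
H(t)\gtrsim \eps^p (1+t)^{-\alpha'}A(t)^{\,n-1-\frac{(n-1)p}{2}}
\]
with the powers of $(1+t)$ made explicit. This is the standard Yordanov–Zhang computation; the only new point is the mass term, which is absorbed into the ODE for $m$.

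\emph{Step 2: shell upper bound.} Take shells $I_k=[t_k,t_{k+1}]$ with $\omega\log(1+t_k)=k\pi+\theta_0$, chosen so that on a middle portion of each shell the Cauchy–Euler solution
\[
h(t)=(1+t)^{1/2}\sin(\omega\log(1+t)-\theta_0)
\]
has a definite sign and size comparable to $(1+t)^{1/2}$. I test with $\phi(x,t)=h(t)\,\eta_k(t)\,\zeta_{R_k}(x)$, where:
\begin{itemize}
\item $\eta_k$ is a smooth cutoff adapted to the shell, in logarithmic time, with $|\eta_k^{(j)}|\lesssim(1+t)^{-j}$;
\item $R_k\approx R_0+A(t_{k+1})$, so that $\zeta_{R_k}\equiv1$ on the support of $u$.
\end{itemize}
Since $h''+\mu^2(1+t)^{-2}h=0$ and $\Delta\zeta_{R_k}=0$ on the support of $u$, the linear terms reduce to commutators with $\eta_k$, which are of size $(1+t)^{-3/2}$. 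Hölder's inequality then gives
\[
\int_{I_k}(1+t)^{-\alpha}h\eta_k H\dd t\lesssim \Big(\int_{I_k}(1+t)^{-\alpha}h\eta_k H\Big)^{1/p}\cdot(\text{volume term}),
\]
where the volume term is
\[
\Big(\int_{I_k}(1+t)^{\frac{\alpha}{p-1}-\frac{3p'}{2}+\frac12}\,A(t)^n\dd t\Big)^{1/p'} .
\]
For shells with $t_k\ge1$ the initial-data term vanishes. This yields an upper bound for $\int_{I_k}(1+t)^{-\alpha+1/2}H$ of order $(1+t_k)^{\,\cdots}A(t_k)^{n(p-1)\cdots}$.

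\emph{Step 3: comparison.} Insert the lower bound of Step 1 into the left side of the shell estimate on a shell at time scale $T$. The exponent comparison produces
\[
\eps^{p}\,T^{-\frac{(1+\ell)\gamma_\alpha(n,\ell;p)}{2(p-1)}\cdot c}\lesssim 1
\]
for some $c>0$. Exponent bookkeeping in $A(t)\sim t^{1+\ell}$ should give exactly $\gamma_\alpha$. Hence, if $\gamma_\alpha<0$, any solution existing on a shell near $T$ forces
\[
T\lesssim\eps^{\frac{2p(p-1)}{(1+\ell)\gamma_\alpha}},
\]
which gives both the nonexistence of global solutions and \eqref{eq:lifespan-weighted}.

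If the direct comparison loses a factor, I would instead feed the shell bound back into the Step 1 identity. The quantity $mF'-m'F$ grows by the nonlinear contribution, and iterating this gives an improved lower bound (slicing/iteration), which then closes against the shell upper bound.

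\emph{Main obstacle.} The delicate step is Step 2. The shells have length comparable to $t$, so $\eta_k$ is genuinely logarithmic, and the sign change of $h$ outside the middle portion must not spoil the estimate. The boundary layers where $h\eta_k$ is small must be controlled: terms like $u\,h\eta_k''$ there have no sign. I would first try choosing $\eta_k$ supported strictly inside a positivity interval of $h$, with $h\ge c(1+t)^{1/2}$ on the support of $\eta_k$. Then $h\eta_k\ge0$ everywhere, and the commutator terms are bounded by $|u|$ times $(1+t)^{-3/2}$, which can be absorbed by Hölder against the nonlinear term using the standard $\eta_k^{1/p}$ trick (taking $\eta_k=\tilde\eta_k^{\,2p'}$).
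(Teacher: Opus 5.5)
Your outline is the paper's argument. The positive adjoint mode $m_\lambda\varphi_\lambda$ with the Wronskian monotonicity gives $H(t)\gtrsim\eps^p(1+t)^{-\ell p/2}(1+A(t))^{-\frac{n-1}{2}(p-2)}$. The test $h\eta\zeta$, with $\eta$ a high power of a logarithmic cutoff supported where $h\ge c(1+t)^{1/2}$, plus H\"older gives the shell upper bound, and the two are compared on shells with $1+t\asymp R_j$. Two steps, however, fail as written.

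\emph{The normalization $\lambda=1$ breaks Step 1.} Positivity of $m$ on all of $[0,\infty)$, together with $m'(0)<0$, is indispensable. Indeed, $Q'=(1+t)^{-\alpha}\int|u|^p m\varphi\dd x$ must be nonnegative, and $Q(0)=\eps\int(m(0)g-m'(0)f)\varphi\dd x$ must be positive for arbitrary $f,g\ge0$. With $\lambda=1$ one has $-V(t)=\mu^2(1+t)^{-2}-(1+t)^{2\ell}\ge\mu^2/8$ on $[0,1]$ once $\mu^2\ge8$. By Sturm comparison, every nontrivial solution of $m''=Vm$, in particular the one decaying at infinity, then vanishes in every subinterval of length $\pi\sqrt8/|\mu|$. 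So for $|\mu|>\pi\sqrt8$ it changes sign in $[0,1]$, and no comparison argument can rescue positivity. The paper's remedy, which you should adopt, is to take $\lambda>|\mu|$; every $\lambda>0$ gives an eigenfunction $\varphi_\lambda$. Then $V_\lambda\ge(\lambda^2-\mu^2)(1+t)^{-2}>0$ on $[0,\infty)$, so $P=mm'$ is strictly increasing. Since $P<0$ at large times, this forces $m>0$ and $m'<0$ everywhere.

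\emph{The exponent in your volume term is off, and the conclusion depends on exact bookkeeping.} Pulling $\bigl((1+t)^{-\alpha}h\eta\bigr)^{-1/p}$ out of $H^{1/p}$ and raising to $p'$ produces the factor $h^{-1/(p-1)}\asymp(1+t)^{-\frac{1}{2(p-1)}}$. So the exponent is $\frac{\alpha}{p-1}-\frac{3p'}{2}-\frac{1}{2(p-1)}$, not $\frac{\alpha}{p-1}-\frac{3p'}{2}+\frac12$. Your version loses a factor $R^{p'/2}$ and would not reach $\gamma_\alpha$.

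With the correct exponent, using $\frac12+\frac{1}{2(p-1)}=\frac{p'}{2}$ and $h\asymp R_j^{1/2}$ on the inner core, you get $\int_{I_j}(1+t)^{-\alpha}H\dd t\lesssim R_j^{1-2p'+\frac{\alpha}{p-1}}(1+A(R_j))^n$. The lower bound gives $\int_{I_j}(1+t)^{-\alpha}H\dd t\gtrsim\eps^pR_j^{1-\alpha-\frac{\ell p}{2}}(1+A(R_j))^{-\frac{n-1}{2}(p-2)}$. Combining these with $1+A(R_j)\asymp R_j^{1+\ell}$ yields $\eps^p\lesssim R_j^{\Theta_\alpha}$, where $\Theta_\alpha=\frac{(1+\ell)\gamma_\alpha(n,\ell;p)}{2(p-1)}$. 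So your unspecified constant is exactly $c=1$, and the iteration fallback is unnecessary. The lifespan bound then follows by applying this to the largest shell contained in $(0,T_\eps)$ and using $R_{j+1}\le CR_j$.
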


A first consequence of \Cref{thm:lifespan-weighted} is an explicit
description of parameter ranges in which the polynomial is
negative. In particular, when the propagation speed decreases sufficiently
fast, the blow-up condition holds for every $p>1$; outside this range, it
holds below the corresponding Strauss-type exponent.

\begin{corollary}\label{cor:alpha-zero-strongly-negative-ell}
Assume $\alpha=0$, $n\ge1$, and $\mu^2>\frac14$. If any of the following hypotheses hold

\begin{enumerate}[label=$(H\arabic*)$]
    \item \label{H1}$-1<\ell\le -1+\frac1n$ and $p>1$;

    \item \label{H2} $-1+\frac1n< \ell <0$ and $1<p<p_{\mathrm{Str}}(n,\ell)$; 
\end{enumerate}
then, there is no global weak solution of \eqref{eq:main2} with nontrivial nonnegative smooth compactly supported initial data.
\end{corollary}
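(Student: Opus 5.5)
The plan is to deduce the corollary directly from \Cref{thm:lifespan-weighted} with $\alpha=0$. The theorem already gives nonexistence of global weak solutions for nontrivial, nonnegative, smooth, compactly supported data whenever $\gamma_0(n,\ell;p)<0$. So the only task is to check that each of the hypotheses \ref{H1} and \ref{H2} forces $\gamma_0(n,\ell;p)<0$ for the stated exponents $p>1$. This is elementary algebra on the quadratic polynomial \eqref{eq:gamma}.

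First I would rewrite the coefficients over the common denominator $1+\ell>0$. Write $\gamma_0(n,\ell;p)=a\,p^2-b\,p-2$. Then
\[
a=n-1+\frac{\ell}{1+\ell}=\frac{n(1+\ell)-1}{1+\ell},
\qquad
b=n+1-\frac{3\ell}{1+\ell}=\frac{(n+1)+(n-2)\ell}{1+\ell}.
\]
Hence $a\le 0$ exactly when $\ell\le -1+\frac1n$, and $a>0$ exactly when $\ell>-1+\frac1n$. Next I would check that $b>0$ for every $\ell\in(-1,0)$. For $n\ge2$, the term $(n-2)\ell$ is at least $-(n-2)$, so the numerator of $b$ is at least $3$. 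For $n=1$, the numerator equals $2-\ell>0$.

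Under \ref{H1} we have $a\le0$ and $b>0$. So for every $p>0$,
\[
\gamma_0(n,\ell;p)=a p^2-bp-2\le -bp-2<-2<0.
\]
In particular this holds for all $p>1$, and \Cref{thm:lifespan-weighted} applies.

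Under \ref{H2} we have $a>0$. Then $p\mapsto\gamma_0(n,\ell;p)$ is a convex parabola with $\gamma_0(n,\ell;0)=-2<0$. Its two real roots therefore have product $-2/a<0$, so exactly one root is positive; this is $p_{\mathrm{Str}}(n,\ell)$. Consequently $\gamma_0(n,\ell;p)<0$ for all $p\in[0,p_{\mathrm{Str}}(n,\ell))$. In particular this holds for $1<p<p_{\mathrm{Str}}(n,\ell)$, which is a nonempty range only if $p_{\mathrm{Str}}(n,\ell)>1$. Again \Cref{thm:lifespan-weighted} yields the conclusion.

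There is no real obstacle here. The only point needing care is the sign of the linear coefficient $b$ in case \ref{H1}, handled separately for $n=1$ and $n\ge2$ as above. The degenerate case $a=0$ (that is, $\ell=-1+\frac1n$) is covered by the same inequality.
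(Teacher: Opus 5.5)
Your proposal is correct and follows essentially the same route as the paper: apply \Cref{thm:lifespan-weighted} with $\alpha=0$ and check the sign of $\gamma_0(n,\ell;p)$ from the signs of its coefficients under (H1), and from the uniqueness of the positive root under (H2). The differences are cosmetic. The paper gets positivity of the linear coefficient directly from $(n+1)(1+\ell)-3\ell>0$ using $\ell<0<1+\ell$, with no case split in $n$. It locates the root via $\gamma_0(n,\ell;1)<0$ instead of your product-of-roots argument. Your handling of the borderline $n(1+\ell)=1$ as $a\le 0$ is slightly more careful than the paper's strict inequality.
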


\begin{proof}
For $\alpha=0$, the weighted Strauss-type polynomial becomes
\[
\gamma_0(n,\ell;p)
=
\left(
n-1+\frac{\ell}{1+\ell}
\right)p^2
-
\left(
n+1-\frac{3\ell}{1+\ell}
\right)p
-2.
\]
Equivalently,
\[
(1+\ell)\gamma_0(n,\ell;p)
=
\bigl(n(1+\ell)-1\bigr)p^2
-
\bigl((n+1)(1+\ell)-3\ell\bigr)p
-2(1+\ell).
\]
By assumption \ref{H1}, $n(1+\ell)-1<0$.
Moreover, since $\ell<0<1+\ell$, we have
\[
(n+1)(1+\ell)-3\ell>0.
\] Hence, for every $p>1$,
\[
\bigl(n(1+\ell)-1\bigr)p^2
-
\bigl((n+1)(1+\ell)-3\ell\bigr)p
-2(1+\ell)<0
\] and, 
consequently, for every $p>1$,
$\gamma_0(n,\ell;p)<0$.
Therefore, the blow-up condition of \Cref{thm:lifespan-weighted} is
automatically satisfied for every $p>1$.

Now, under \ref{H2}, we note that the the leading coefficient of $\gamma_0(n,\ell;p)$ is positive, thus $\gamma_0(n,\ell;p)\to+\infty$ as $p\to+\infty$. Since $\gamma_0(n,\ell;1) = -6<0$, there exists a root greater than 1. This is precisely $p_{\mathrm{Str}}(n,\ell)$. Therefore, if $1<p<p_{\mathrm{Str}}(n,\ell)$, we have $\gamma_0(n,\ell;1)<0$ and the blow-up follows from \Cref{thm:lifespan-weighted}.
\end{proof}

\begin{remark}
When $n=1$, condition \ref{H1} becomes $-1<\ell <0$, and therefore the above corollary gives
finite-time blow-up for every $p>1$.
\end{remark}

Next, we complement the blow-up result of \cite{Palmieri2025}, obtained
for $\delta>0$, to the oscillatory regime $\delta<0$. While in the
non-oscillatory regime the heat-like behavior still affects the blow-up condition,
in the oscillatory regime the heat-like phenomenon disappears and only the
wave-like behavior remains. In particular, the blow-up condition is
governed by the Strauss-type exponent. Moreover, the upper bound for the lifespan
remains the same as in the non-oscillatory case.

\begin{corollary}\label{cor:EPDT-equation}
Assume that $n\ge1$, $-1<\ell<0$, $\beta\ge0$, $\mu\in\mathbb R$,
$p>1$ are such that and $\delta:=(1-\beta)^2-4\mu^2<0$.
Let $w$ be a finite-propagation weak solution of
\eqref{eq:intro-general-damping-mass} with initial data
\[
w(x,0)=\varepsilon f(x)
\quad \text{and} \quad
w_t(x,0)=\varepsilon g(x),
\]
where
\[
f,g\in C_c^\infty(\mathbb R^n),
\quad
f\ge0,\quad g\ge0,\quad \text{and} \quad (f,g)\not\equiv(0,0).
\]
We have the following:
\begin{enumerate}[label=$(\roman*)$]
    \item If $n(1+\ell)+\beta \le 1$, then no global weak solution exists for any $p>1$ and any $\varepsilon>0$;
    \item If $n(1+\ell)+\beta > 1$ and $1<p<p_{\mathrm{Str}}\!\left(n+\frac{\beta}{1+\ell},\ell\right)$, then no global weak solution exists for any $\varepsilon>0$.
\end{enumerate}
In either case, there
exist $\varepsilon_0>0$ and $C>0$, independent of $\varepsilon$, such that
for every $0<\varepsilon\le\varepsilon_0$, the lifespan $T_\varepsilon$ of
$w$ satisfies
\[
T_\varepsilon
\le
C\varepsilon^{
\frac{2p(p-1)}
{(1+\ell)\gamma_{\alpha_\beta}(n,\ell;p)}
}
\quad \text{with}\quad\alpha_\beta = \frac{\beta}{2}(p-1).
\]

\end{corollary}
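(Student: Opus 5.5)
The plan is to reduce \eqref{eq:intro-general-damping-mass} to the weighted problem \eqref{eq:main2} by a Liouville-type change of unknown, and then read both items and the lifespan bound off \Cref{thm:lifespan-weighted}.

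\textbf{The substitution.} Set
\[
w(x,t)=(1+t)^{-\beta/2}u(x,t).
\]
A direct computation gives
\[
w_{tt}+\frac{\beta}{1+t}w_t
=(1+t)^{-\beta/2}\Big(u_{tt}+\frac{\beta(2-\beta)}{4(1+t)^2}u\Big).
\]
Hence $w$ solves \eqref{eq:intro-general-damping-mass} if and only if
\[
u_{tt}-(1+t)^{2\ell}\Delta u+\frac{\tilde\mu^2}{(1+t)^2}u=(1+t)^{-\frac{\beta}{2}(p-1)}|u|^p,
\qquad
\tilde\mu^2=\mu^2+\frac{\beta(2-\beta)}{4}.
\]
Moreover $\tilde\mu^2-\tfrac14=-\tfrac{\delta}{4}>0$, so $\delta<0$ is exactly $\tilde\mu^2>\tfrac14$. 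The weight is $\alpha_\beta=\tfrac{\beta}{2}(p-1)\ge0$.

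The initial data become
\[
u(x,0)=\varepsilon f,\qquad u_t(x,0)=\varepsilon\big(g+\tfrac{\beta}{2}f\big).
\]
These are still smooth, compactly supported, nonnegative and not both zero, since $\beta\ge0$.

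\textbf{Transferring the weak formulation.} I will verify that a finite-propagation weak solution $w$ gives one for $u$ in the sense of \Cref{def:weak}. The regularity class and the support condition \eqref{eq:finite-speed-weak} are preserved, because $(1+t)^{\beta/2}$ is smooth and positive. For the weak identity, I take $\phi$ a test function for the $u$-problem and test the $w$-problem with $\psi=(1+t)^{-\beta/2}\phi$. Integrating by parts, the adjoint operator applied to $\psi$ reproduces $(1+t)^{-\beta/2}$ times the $u$-operator applied to $\phi$. The boundary terms at $t=0$ combine into $\varepsilon\int\big(\tilde g\phi(0)-f\phi_t(0)\big)$ with $\tilde g=g+\tfrac{\beta}{2}f$. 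This is a routine check, and it is the only step needing care, mainly the bookkeeping of the $t=0$ terms.

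\textbf{Applying the theorem.} \Cref{thm:lifespan-weighted} then applies with $\alpha=\alpha_\beta$ and gives blow-up together with the stated lifespan bound whenever $\gamma_{\alpha_\beta}(n,\ell;p)<0$. The lifespan of $w$ equals that of $u$. It remains to identify when this inequality holds. Multiplying by $1+\ell$ and substituting $\alpha_\beta$, I will show
\[
(1+\ell)\gamma_{\alpha_\beta}(n,\ell;p)=\big(N(1+\ell)-1\big)p^2-\big((N+1)(1+\ell)-3\ell\big)p-2(1+\ell),
\qquad N=n+\frac{\beta}{1+\ell}.
\]
The leading coefficient comes from $n(1+\ell)-1+\beta=N(1+\ell)-1$. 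The linear coefficient comes from $-(n+1)(1+\ell)+3\ell+\beta-\beta$, after the term $-\beta p$ moves to the square. So
\[
\gamma_{\alpha_\beta}(n,\ell;p)=\gamma_0(N,\ell;p).
\]

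Cases (i) and (ii) then follow exactly as in the proof of \Cref{cor:alpha-zero-strongly-negative-ell}, with $n$ replaced by $N$. The condition $N(1+\ell)\le1$ is $n(1+\ell)+\beta\le1$, which gives negativity for all $p>1$. Otherwise $\gamma_0(N,\ell;p)<0$ for $1<p<p_{\mathrm{Str}}(N,\ell)$.
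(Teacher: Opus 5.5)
Your reduction is the same as the paper's proof. You use the substitution $u=(1+t)^{\beta/2}w$, the same $\tilde{\mu}^2=\mu^2+\frac{\beta(2-\beta)}{4}=\frac{1-\delta}{4}>\frac14$, the same weight $\alpha_\beta\ge0$ and transformed data, then apply \Cref{thm:lifespan-weighted} and do the algebra on $\gamma_{\alpha_\beta}$. Your identity $\gamma_{\alpha_\beta}(n,\ell;p)=\gamma_0\bigl(n+\frac{\beta}{1+\ell},\ell;p\bigr)$ is correct and is a cleaner way to state the paper's ``shift'' remark. Only your verbal account of the linear coefficient is garbled: it is $-(n+1)(1+\ell)+3\ell-\beta$, because $\beta(p-1)p$ splits into $+\beta p^2$ and $-\beta p$. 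The paper does not check the weak formulation for $u$ at all. You do try to, and as written that step fails, because your test function has the wrong sign in its exponent.

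Let $L^\ast\psi=\psi_{tt}-\partial_t\bigl(\frac{\beta}{1+t}\psi\bigr)-(1+t)^{2\ell}\Delta\psi+\frac{\mu^2}{(1+t)^2}\psi$ be the adjoint of the damped operator. Take the natural weak formulation $\iint wL^\ast\psi=\iint|w|^p\psi+\eps\int\bigl((g+\beta f)\psi(x,0)-f\psi_t(x,0)\bigr)\dd x$, and set $\tilde{L}\phi:=\phi_{tt}-(1+t)^{2\ell}\Delta\phi+\tilde{\mu}^2(1+t)^{-2}\phi$. For $\psi=(1+t)^s\phi$ a direct computation gives
\[
L^\ast\psi=(1+t)^s\Bigl[\phi_{tt}+\frac{2s-\beta}{1+t}\,\phi_t-(1+t)^{2\ell}\Delta\phi+\frac{\mu^2+(s-1)(s-\beta)}{(1+t)^2}\,\phi\Bigr].
\]
So the first-order term vanishes only for $s=\beta/2$, and then the mass coefficient is exactly $\tilde{\mu}^2$.

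With your choice $s=-\beta/2$ and $\beta>0$, several things go wrong:
\begin{itemize}
\item a term $-2\beta(1+t)^{-1}\phi_t$ survives;
\item the pairing with $w=(1+t)^{-\beta/2}u$ produces an extra weight $(1+t)^{-\beta}$;
\item the nonlinearity becomes $(1+t)^{-\beta(p+1)/2}|u|^p\phi$ instead of $(1+t)^{-\alpha_\beta}|u|^p\phi$;
\item the data term becomes $\eps\int\bigl((g+\frac{3\beta}{2}f)\phi(x,0)-f\phi_t(x,0)\bigr)\dd x$.
\end{itemize}
Even your claimed identity $L^\ast\psi=(1+t)^{-\beta/2}\tilde{L}\phi$ would leave a factor $(1+t)^{-\beta}$ after pairing with $w$.

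The repair is to test with $\psi=(1+t)^{\beta/2}\phi$. Then $L^\ast\psi=(1+t)^{\beta/2}\tilde{L}\phi$, so $wL^\ast\psi=u\,\tilde{L}\phi$ and $|w|^p\psi=(1+t)^{-\alpha_\beta}|u|^p\phi$. Since $\psi(\cdot,0)=\phi(\cdot,0)$ and $\psi_t(\cdot,0)=\phi_t(\cdot,0)+\frac{\beta}{2}\phi(\cdot,0)$, the data term is $\eps\int\bigl((g+\frac{\beta}{2}f)\phi(x,0)-f\phi_t(x,0)\bigr)\dd x$. This is exactly \eqref{eq:weak-formulation} for the transformed problem. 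With this correction the rest of your argument goes through.
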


\begin{proof}


Defining the change of variables
$u(x,t):=(1+t)^{\beta/2}w(x,t)$ and replacing it in \eqref{eq:intro-general-damping-mass}, we obtain that $u$ satisfies

\[
\begin{cases}\displaystyle
    u_{tt}-(1+t)^{2\ell}\Delta u
+\frac{\widetilde{\mu}^2}{(1+t)^2}u
=
(1+t)^{-\alpha_\beta}|u|^p,& (x,t)\in\mathbb R^n\times(0,T)\\[0.8em]
u(x,0)=\varepsilon f(x),
\qquad
u_t(x,0)=\varepsilon\left(g(x)+\frac{\beta}{2}f(x)\right), & x\in\R^n
\end{cases}
\]
where 
\[
\alpha_\beta:=\frac{\beta}{2}(p-1) \quad \text{and}\quad \widetilde{\mu}^2 := \mu^2+\frac{\beta(2-\beta)}4.\]

Since $\beta\ge0$, $f\ge0$, $g\ge0$, and $(f,g)\not\equiv(0,0)$,
the transformed initial data are nonnegative, compactly supported, and not both
identically zero. Also, multiplication by the positive factor
$(1+t)^{\beta/2}$ preserves the finite-propagation support condition and is
invertible on every finite time interval.
Since $\delta=(1-\beta)^2-4\mu^2<0$, we have
\[
\widetilde{\mu}^2
=
\mu^2+\frac{\beta(2-\beta)}4
=
\frac{1-\delta}{4}>\frac14.
\]
Thus $\delta<0$ implies $\nu^2>1/4$, so \Cref{thm:lifespan-weighted}
applies to $u$, provided
\[
\gamma_{\alpha_\beta}(n,\ell;p)<0.
\]
The transformation $u=(1+t)^{\beta/2}w$ is bijective on every interval
$[0,T)$; hence, $u$ and $w$ have the same lifespan. Therefore, the
nonexistence of global solutions and the stated lifespan estimate for $w$
follow from \Cref{thm:lifespan-weighted}.
Since 
\begin{equation}\label{eq:strauss-polinomio-deslocado}
    \begin{split}
\gamma_{\alpha_\beta}(n,\ell;p)
&=
\left(n-1+\frac{\ell}{1+\ell}\right)p^2
-
\left(
n+1-\frac{3\ell+\beta(p-1)}{1+\ell}
\right)p
-2 \\
&=
\left(n+\frac{\beta}{1+\ell}-1+\frac{\ell}{1+\ell}\right)p^2
-
\left(
n+\frac{\beta}{1+\ell}+1-\frac{3\ell}{1+\ell}
\right)p
-2.
\end{split}
\end{equation}
We can see that, under the hypothesis that $n(1+\ell)+\beta\le 1$, and since $\beta\ge 0$, all terms in parenthesis in \eqref{eq:strauss-polinomio-deslocado} are positive. Then, for any $p>0$, $\gamma_{\alpha_\beta}(n,\ell;p)<-2<0$. This proves $(i)$.

For $(ii)$, the assumption $n(1+\ell)+\beta>1$ implies that the coefficient of $p^2$ in
$\gamma_{\alpha_\beta}(n,\ell;p)$ is positive. Moreover,
\[
\gamma_{\alpha_\beta}(n,\ell;1)=-\frac{4}{1+\ell}<0 \quad \text{ and }\quad \gamma_{\alpha_\beta}(n,\ell;0)=-2<0.
\]
Then, $\gamma_{\alpha_\beta}(n,\ell;\cdot)$
has exactly one positive root. Note that the polinomial \eqref{eq:strauss-polinomio-deslocado} is \eqref{eq:strauss-ell} shifted by $\frac\beta{1+\ell}$. Therefore, this positive root is the Strauss root $p_{\mathrm{Str}}\!\left(n,\ell\right)$ shifted by $\frac{\beta}{1+\ell}$. Consequently,
\[
p>1
\quad\text{and}\quad
\gamma_{\alpha_\beta}(n,\ell;p)<0
\]
if and only if
\[
1<p<p_{\mathrm{Str}}\!\left(n+\frac{\beta}{1+\ell},\ell\right).\qedhere
\]
\end{proof}

\begin{remark}
Since $n\ge1$ and $-1<\ell<0$, one has $n(1+\ell)>0$. Thus, the
condition $n(1+\ell)+\beta\le 1$ together with $\beta\ge0$ implies $0\le \beta\le 1-n(1+\ell)<1$. Therefore, item $(i)$ can occur only for $0\le\beta<1$. If $n(1+\ell)>1$, this case is empty.
\end{remark}




\section{A weighted lower bound for the nonlinear term}\label{sec:lower-bound}

Our goal is to derive a pointwise lower bound for the nonlinear term $H(t)$, defined in \ref{def:H}, where the key auxiliary ingredient is the existence of a positive decreasing temporal factor solving the corresponding separated adjoint equation.

Positive adjoint profiles of this type are a standard device in blow-up arguments based on weighted test functions. For instance \cite{YZ2006} introduced a related elliptic-spatial test function method for the classical semilinear wave equation, later adapted by  \cite{HHHT22,FanLaiTakamura25,Palmieri2025, WakaYor17,WakaYor19} to time-dependent speeds and related damped and mass settings.

The estimates for the adjoint profile are frequently obtained through its representation in terms of modified Bessel functions. More precisely, after a suitable change of variables, the adjoint equation can be brought into a
modified Bessel equation, and the decaying solution is then expressed by means
of a modified Bessel function of the second kind with order proportional to $\sqrt{\delta}$. In the present
regime the same reduction remains possible, but it leads to a purely
imaginary order. Since only a limited set of properties of the adjoint profile is needed in the sequel,
we choose instead to prove the required bounds directly by elementary ODE methods. This has the advantage
of keeping the argument essentially self-contained and of avoiding unnecessary
use of special-function theory. For convenience, this proof is presented in \Cref{appendixA}.

\begin{proposition}\label{prop:adjoint-profile}
	Assume $-1<\ell<0$ and $\mu^2>1/4$. For every $\lambda>|\mu|$, there exists a function
	$m_\lambda: [0,\infty) \longrightarrow \R$ of class $C^2$
	such that, for all $t\ge0$, there hold $m_\lambda(t)>0$,
	$m_\lambda'(t)<0$, and
	\begin{equation}\label{eq:m-eq}
		m_\lambda''(t)
		=
		\biggl(\lambda^2(1+t)^{2\ell}-\frac{\mu^2}{(1+t)^2}\biggr) \, m_\lambda(t).
	\end{equation}
	Moreover, there exist constants $c>0$, $C>0$ and $t_0\ge 1$, depending at most on $\lambda,\ell$, and $\mu$, such that, for all $t\ge t_0$,
	\begin{equation}\label{eq:m-bounds}
		c\,a(t)^{-1/2}e^{-\lambda A(t)}
		\le
		m_\lambda(t)
		\le
		C\,a(t)^{-1/2}e^{-\lambda A(t)}
	\end{equation}
	and
	\begin{equation}\label{eq:mprime-bounds}
		c\,a(t)^{1/2}e^{-\lambda A(t)}
		\le
		-m_\lambda'(t)
		\le
		C\,a(t)^{1/2}e^{-\lambda A(t)}.
	\end{equation}
\end{proposition}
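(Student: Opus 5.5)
We will construct $m_\lambda$ as the recessive (decaying) solution of \eqref{eq:m-eq} at $t=+\infty$, using a Liouville--Green (WKB) change of variables. Write $q(t):=\lambda^2(1+t)^{2\ell}-\mu^2(1+t)^{-2}$. Since $-1<\ell<0$, we have $2\ell>-2$. Hence for $t\ge0$,
\[
q(t)=(1+t)^{-2}\bigl(\lambda^2(1+t)^{2\ell+2}-\mu^2\bigr)\ge(1+t)^{-2}(\lambda^2-\mu^2)>0,
\]
and this is exactly where the hypothesis $\lambda>|\mu|$ enters. So \eqref{eq:m-eq} is $m''=q\,m$ with $q>0$ on all of $[0,\infty)$.

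The argument then has three steps.

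\textbf{Step 1: sign and monotonicity from convexity.} Any solution with $m>0$ satisfies $m''>0$. The recessive solution, i.e.\ the one tending to $0$ as $t\to\infty$, is produced by a standard limiting argument. Let $m_N$ solve \eqref{eq:m-eq} with $m_N(0)=1$ and $m_N(N)=0$. This two-point problem is uniquely solvable since $q>0$ excludes nontrivial solutions vanishing at both ends. By the maximum principle, $0<m_N\le1$ on $[0,N)$, and $m_N$ is convex, so $m_N'<0$. The family $m_N$ is monotone in $N$ (comparison) and uniformly bounded in $C^2$ on compacts. Its limit $m_\lambda$ solves \eqref{eq:m-eq}, is positive, nonincreasing and convex. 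Strict inequalities follow directly. If $m_\lambda'(t_1)=0$ for some $t_1$, then $m'\ge0$ afterwards by convexity, and together with $m'\le0$ this gives $m'\equiv0$ on $[t_1,\infty)$. Then $m''=qm>0$ there, a contradiction. If instead $m_\lambda(t_1)=0$, uniqueness forces $m_\lambda\equiv0$, contradicting $m_\lambda(0)=1$. Alternatively, one can define $m_\lambda$ directly from the WKB construction of Step 2 and then use this convexity argument for positivity. Since $m$ is decreasing and convex with $m'<0$, it stays positive.

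\textbf{Step 2: asymptotics via Liouville--Green.} Introduce $\tau=\int_0^t\sqrt{q}$ and $m=q^{-1/4}w$. The equation becomes $w_{\tau\tau}=(1+\rho(\tau))w$ with error term
\[
\rho=q^{-3/4}\,\frac{d^2}{dt^2}\bigl(q^{-1/4}\bigr).
\]
The key computation is $\int^\infty|\rho|\,d\tau<\infty$. Using $q\sim\lambda^2(1+t)^{2\ell}$, we get $q^{-1/4}\sim(1+t)^{-\ell/2}$, so $(q^{-1/4})''\sim(1+t)^{-\ell/2-2}$, and $\rho\,d\tau\sim(1+t)^{-2-2\ell}\,dt$ up to constants. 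This is integrable at infinity because $\ell>-1$. Standard Volterra integral equation theory then gives a solution $w=e^{-\tau}(1+o(1))$ with $w_\tau=-e^{-\tau}(1+o(1))$. This solution is recessive, so by uniqueness of the recessive solution up to scaling it agrees with $m_\lambda$ up to a positive constant.

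Next, $\sqrt q=\lambda a(t)\sqrt{1-\mu^2\lambda^{-2}(1+t)^{-2-2\ell}}=\lambda a(t)+O\bigl((1+t)^{-2-\ell}\bigr)$, and the error term is integrable. Hence $\tau=\lambda A(t)+c_0+o(1)$, so $e^{-\tau}\asymp e^{-\lambda A(t)}$. Also $q^{-1/4}\asymp a^{-1/2}$. Together these give \eqref{eq:m-bounds}. For the derivative,
\[
m'=(q^{-1/4})'w+q^{1/4}w_\tau .
\]
The second term is $\asymp -a^{1/2}e^{-\lambda A}$. The first is $O\bigl((1+t)^{-1}a^{-1/2}e^{-\lambda A}\bigr)$, which is $o(a^{1/2}e^{-\lambda A})$ because $(1+t)^{-1}a^{-1}=(1+t)^{-1-\ell}\to0$. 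Choosing $t_0$ large gives \eqref{eq:mprime-bounds}.

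\textbf{Main obstacle.} The delicate step is the Volterra/Liouville--Green estimate: verifying that $\rho$ is integrable in $\tau$ and that the remainder bounds apply to both $w$ and $w_\tau$. The first thing to try is the classical integral equation
\[
w(\tau)=e^{-\tau}-\int_\tau^\infty\sinh(\tau-s)\,\rho(s)\,w(s)\,ds,
\]
solved by iteration on $[\tau_0,\infty)$ with $\tau_0$ chosen so that $\int_{\tau_0}^\infty|\rho|<1/2$. Everything else is routine.
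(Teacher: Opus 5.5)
Your proof is correct, but it reaches the asymptotics by a different route from the paper.

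\textbf{The paper's route.} The paper works with the logarithmic derivative. It writes $-m'/m=b_0+\eta$, where $b_0=\lambda(1+t)^{\ell}+\frac{\ell}{2(1+t)}$ is minus the log-derivative of $a^{-1/2}e^{-\lambda A}$. It solves the resulting Riccati equation for $\eta$ by a contraction on nonpositive functions with $|\eta(t)|\le M(1+t)^{-\ell-2}$, and sets $m_\lambda=(1+t)^{-\ell/2}e^{-\lambda A(t)}\exp(-\int_T^t\eta)$. It then continues the solution backward to $[0,T_1]$. There $m_\lambda>0$ and $m_\lambda'<0$ follow from the strict monotonicity of $P=m_\lambda m_\lambda'$; this is where $\lambda>|\mu|$ enters, exactly as your $q>0$ does.

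\textbf{What each route buys.} The Riccati form is economical. The sign $\eta\le0$ gives the lower bound in \eqref{eq:m-bounds} with constant $1$ and an explicit upper constant. Also, \eqref{eq:mprime-bounds} follows at once from $-m'/m=b_0+\eta\asymp\lambda a$. You instead need the extra comparisons $\tau=\lambda A(t)+c_0+o(1)$ and $q^{-1/4}\asymp a^{-1/2}$, plus the product-rule splitting of $m'$. Your linear Liouville--Green/Volterra route uses off-the-shelf classical theory and gives the sharper statement $m_\lambda=C\,q^{-1/4}e^{-\int_0^t\sqrt q}\,(1+o(1))$. The two are the same WKB idea: the paper's correction $\eta=O((1+t)^{-\ell-2})$ has exactly the size of your error density $\rho\sqrt q$.

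\textbf{Step 1.} The limiting construction is more than you need. Your ``alternative'' (define $m_\lambda$ from the LG solution and continue backward using $m''=qm>0$) is essentially the paper's argument. It also spares you identifying $\lim m_N$ with the LG solution. If you keep Step 1, that identification needs a line. For example, reduction of order gives a second solution of size $a^{-1/2}e^{\lambda A}\to\infty$, so the bounded $m_\lambda$ must be a positive multiple of the LG solution.

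\textbf{Two small slips.} First, the LG error term is $\rho=-q^{-3/4}(q^{-1/4})''$; the sign is harmless, since only $|\rho|$ is used. Second, $(1+t)^{-2-2\ell}$ is the size of $\rho$ itself, not of $\rho\,d\tau$. As written, integrability of $(1+t)^{-2-2\ell}$ would require $\ell>-1/2$, which fails on part of the admissible range. The correct density is $\rho\sqrt q\sim(1+t)^{-2-\ell}$, which is integrable for all $\ell>-1$, as you claim.
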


\vspace{1em}

Now, we use the adjoint function from the previous proposition to construct a separated variables solution to the linear part of \eqref{eq:main2}.  

Fix $\lambda>|\mu|$, and let $m_\lambda$ be given by \Cref{prop:adjoint-profile}. Define
\[
\Phi_\lambda(x,t):=m_\lambda(t)\varphi_\lambda(x),
\]
where $\varphi_\lambda$ is the spherical eigenfunction defined in \ref{def:varphi_lambda}. Since $\Delta\varphi_\lambda=\lambda^2\varphi_\lambda$ and \eqref{eq:m-eq} holds, we have
\begin{equation}\label{eq:adjoint}
\Phi_{\lambda,tt}-(1+t)^{2\ell}\Delta\Phi_\lambda+\frac{\mu^2}{(1+t)^2}\Phi_\lambda=0
\quad\text{in }\R^n\times(0,\infty).
\end{equation}
Let us define the function
\[
Q_\lambda(t)
:=
\int_{\R^n}
\bigl[
m_\lambda(t)u_t(x,t)-m_\lambda'(t)u(x,t)
\bigr]\varphi_\lambda(x) \dd x.
\]

The next lemma shows that testing the equation against the positive adjoint mode $\Phi_\lambda$ yields a monotonicity formula for the Wronskian-type quantity $Q_\lambda$.

\begin{lemma}\label{lem:Q-monotone}
Let $u$ be a weak solution of \eqref{eq:main2} on $[0,T)$ and fix $\lambda>|\mu|$. Then 
$Q_\lambda \in W^{1,1}_{\loc}((0,T))$. Also, for every $0\le s\le t<T$,
\begin{equation}\label{eq:Q-weak-identity}
Q_\lambda(t)-Q_\lambda(s)
=
\int_s^t\int_{\R^n}(1+\tau)^{-\alpha}|u(x,\tau)|^p\Phi_\lambda(x,\tau)\dd x\dd \tau.
\end{equation}
In particular,
\[
Q_\lambda'(t)=(1+t)^{-\alpha}\int_{\R^n}|u(x,t)|^p\Phi_\lambda(x,t)\dd x,
\text{ for a.e. }t\in(0,T),
\]
and $Q_\lambda$ is nondecreasing on $[0,T)$.
\end{lemma}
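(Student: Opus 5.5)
The plan is to insert into the weak formulation \eqref{eq:weak-formulation} the test function $\phi(x,t)=\eta(t)\,\Phi_\lambda(x,t)\,\zeta_R(x)$. Here $\eta\in C_c^\infty([0,T))$ is a temporal cutoff approximating $\one_{[s,t]}$, and $R$ is chosen large enough that $\zeta_R\equiv1$ on $B_{R_0+A(t')}$ for all $t'$ in $\supp\eta$. The finite-propagation condition \eqref{eq:finite-speed-weak} then makes $\zeta_R$ invisible on the support of $u$. Since $\Phi_\lambda$ solves the adjoint equation \eqref{eq:adjoint}, the operator applied to $\eta\Phi_\lambda$ reduces to $\eta''\Phi_\lambda+2\eta'\Phi_{\lambda,t}$; the derivatives falling on $\zeta_R$ vanish on $\supp u$. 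The identity therefore reads
\[
\int_0^T\!\!\int_{\R^n}u\,(\eta''m_\lambda+2\eta' m_\lambda')\varphi_\lambda\dd x\dd t
=\int_0^T\!\!\int_{\R^n}(1+t)^{-\alpha}|u|^p\eta\Phi_\lambda\dd x\dd t+\eps\!\int_{\R^n}\bigl(g\,\eta(0)m_\lambda(0)-f\,(\eta m_\lambda)'(0)\bigr)\varphi_\lambda\dd x.
\]

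Next I integrate by parts once in time in the term containing $\eta''$. This is legitimate because $u\in C^1([0,T);L^1_{\loc})$ and, after the spatial integration against the bounded compactly supported weight, $t\mapsto\int u\varphi_\lambda$ is $C^1$. The identity $\eta'' m u+2\eta' m' u = (\eta' m u)'-\eta'(m u_t - m' u)$ converts the left side into $-\int_0^T\eta'(t)Q_\lambda(t)\dd t$ plus a boundary term at $t=0$. The term $(\eta' m_\lambda u)(0)$ cancels against the $\eta'(0)$ part of the data term, using $u(\cdot,0)=\eps f$, $u_t(\cdot,0)=\eps g$. We obtain
\[
-\int_0^T\eta'(\tau)Q_\lambda(\tau)\dd\tau=\eta(0)Q_\lambda(0)+\int_0^T\eta(\tau)\,G(\tau)\dd\tau,\qquad G(\tau):=(1+\tau)^{-\alpha}\!\int_{\R^n}|u|^p\Phi_\lambda\dd x.
\]
Taking $\eta$ supported in $(0,T)$ shows that the distributional derivative of $Q_\lambda$ equals $G$. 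Moreover, $G\in L^1_{\loc}$ because $u\in L^p_{\loc}$ and $\Phi_\lambda$ is bounded on the compact support; hence $Q_\lambda\in W^{1,1}_{\loc}$.

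Finally, $Q_\lambda$ is continuous on $[0,T)$, since $u,u_t\in C([0,T);L^1_{\loc})$ and the supports are uniformly bounded on compact time intervals. So I let $\eta$ tend to the indicator of $[s,t]$ through mollified ramps; for $s=0$ I use $\eta(0)=1$ and keep the $Q_\lambda(0)$ term. By continuity of $Q_\lambda$ and dominated convergence for $G$, this yields \eqref{eq:Q-weak-identity}. The a.e. derivative formula follows from Lebesgue differentiation, and monotonicity follows from $G\ge0$, because $m_\lambda>0$ by \Cref{prop:adjoint-profile} and $\varphi_\lambda>0$.

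The main obstacle is the rigorous integration by parts and the cancellation of the initial-data terms. One must make sure that only $C^1$-in-time regularity with values in $L^1_{\loc}$ is used, with no second time derivative of $u$. One must also confirm that the signs in \eqref{eq:weak-formulation} produce exactly $Q_\lambda(0)=\eps\int(m_\lambda(0)g-m_\lambda'(0)f)\varphi_\lambda\dd x$.
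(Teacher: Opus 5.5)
Your proposal is correct and follows essentially the same route as the paper. Both test the weak formulation against a time cutoff times $\Phi_\lambda\zeta_R$, let finite propagation remove the spatial-cutoff errors, integrate by parts once in time to produce $-\int\eta' Q_\lambda$, and read off $Q_\lambda'=(1+t)^{-\alpha}\int|u|^p\Phi_\lambda\dd x\in L^1_{\loc}$. The only difference is at $t=0$: the paper uses cutoffs in $C_c^\infty((s,t))$ and reaches $s=0$ via continuity of $Q_\lambda$ and monotone convergence, whereas you allow $\eta(0)\neq0$ so that the data terms cancel the boundary term and give the case $s=0$ directly.
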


\begin{proof}
By \eqref{eq:finite-speed-weak}, for every fixed $t\in[0,T)$ the functions
$u(t,\cdot)$ and $u_t(t,\cdot)$ are compactly supported. Since
$\varphi_\lambda$ is smooth, the quantity $Q_\lambda(t)$ is well defined.

Let $\zeta\in C_c^{\infty}(\R^n)$ be a smooth cutoff function as in \ref{def:smooth_cutoff}. For $R>0$, define
\[
\Phi_{\lambda,R}(x,t):=\zeta_R(x)\Phi_\lambda(x,t).
\]
Then, $\Phi_{\lambda,R}$ is smooth and compactly supported in the spatial
variable. Using \eqref{eq:adjoint}, we compute
\begin{equation}\label{eq:Phi_lambda_R}
\Phi_{\lambda,R,tt}
-(1+t)^{2\ell}\Delta\Phi_{\lambda,R}
+\frac{\mu^2}{(1+t)^2}\Phi_{\lambda,R}
=
-(1+t)^{2\ell}m_\lambda(t)
\Bigl(
2\,\nabla\zeta_R\cdot\nabla\varphi_\lambda+\varphi_\lambda\Delta\zeta_R
\Bigr).
\end{equation}

\noindent Next, define
\[
Q_{\lambda,R}(t)
:=
\int_{\R^n}
\bigl(
m_\lambda(t)u_t(x,t)-m_\lambda'(t)u(x,t)
\bigr)\varphi_\lambda(x)\zeta_R(x)\dd x.
\]
For $0<s<t<T$, we let $\theta\in C_c^{\infty}((s,t))$ and we use
\[
\phi(x,\tau)=\theta(\tau)\Phi_{\lambda,R}(x,\tau) \in C_c^{\infty}(\R^n\times[0,\infty))
\]
as a test function in \eqref{eq:weak-formulation}: since $\Delta\phi
=
\theta(\tau)\Delta\Phi_{\lambda,R}$ and 
\[
\phi_{\tau\tau}
=
\theta''(\tau)\Phi_{\lambda,R}
+
2\theta'(\tau)\partial_\tau\Phi_{\lambda,R}
+
\theta(\tau)\partial_{\tau\tau}\Phi_{\lambda,R},
\]
the left-hand side of \eqref{eq:weak-formulation} becomes
\begin{align*}
\int_0^T\!\!\int_{\R^n}
u(x,\tau)
&\left(
\phi_{\tau\tau}(x,\tau)
-
(1+\tau)^{2\ell}\Delta\phi(x,\tau)
+
\frac{\mu^2}{(1+\tau)^2}\phi(x,\tau)
\right)\dd x\dd \tau\\
& = \int_s^t \!\int_{\R^n}
u(x,\tau)
\big[
\theta''(\tau)\Phi_{\lambda,R}
+
2\theta'(\tau)\partial_\tau\Phi_{\lambda,R}
\big]\dd x\dd \tau
\\
&\quad \
+
\int_s^t
\theta(\tau)
\int_{\R^n}
u(x,\tau)
\left(
\partial_{\tau\tau}\Phi_{\lambda,R}
-
(1+\tau)^{2\ell}\Delta\Phi_{\lambda,R}
+
\frac{\mu^2}{(1+\tau)^2}\Phi_{\lambda,R}
\right)\dd x\dd \tau .
\end{align*}
Let us first treat the terms containing derivatives of $\theta$. Set
\[
G_R(\tau)
:=
\int_{\R^n}
u(x,\tau)\varphi_\lambda(x)\zeta_R(x)\dd x \quad \text{so that} \quad 
G_R'(\tau)
=
\int_{\R^n}
u_t(x,\tau)\varphi_\lambda(x)\zeta_R(x)\dd x
\]
and then
\[
Q_{\lambda,R}(\tau)
=
m_\lambda(\tau)G_R'(\tau)
-
m_\lambda'(\tau)G_R(\tau).
\]
We compute
\begin{align*}
\int_s^t\! \int_{\R^n}
u(x,\tau)
\big[
\theta''(\tau)\Phi_{\lambda,R}
+
2\theta'(\tau)\partial_\tau\Phi_{\lambda,R}
\big]\dd x\dd \tau
&=
\int_s^t
\big[
\theta''(\tau)m_\lambda(\tau)G_R(\tau)
+
2\theta'(\tau)m_\lambda'(\tau)G_R(\tau)
\big]\dd \tau\\
&=
-\int_s^t
\theta'(\tau)
\big[
m_\lambda(\tau)G_R'(\tau)
-
m_\lambda'(\tau)G_R(\tau)
\big]\dd \tau
\\
&=
-\int_s^t Q_{\lambda,R}(\tau)\theta'(\tau)\dd \tau .
\end{align*}
where the second equality is obtained by an integration by parts in the first term. 

For the other integral we use the equation \eqref{eq:Phi_lambda_R} satisfied by $\Phi_{\lambda,R}$. The weak formulation with test function $\phi$ becomes
\begin{equation}\label{eq:Q-weak-cutoff}
\begin{split}
    -\int_s^t Q_{\lambda,R}(\tau)\theta'(\tau)\dd \tau
&-\int_s^t
\theta(\tau)(1+\tau)^{2\ell}m_\lambda(\tau)
\int_{\R^n}
u(x,\tau)
\Bigl(
2\nabla\zeta_R\cdot\nabla\varphi_\lambda
+
\varphi_\lambda\Delta\zeta_R
\Bigr)\dd x\dd \tau \\
&=
\int_s^t
\theta(\tau)(1+\tau)^{-\alpha}
\int_{\R^n}
|u(x,\tau)|^p\Phi_{\lambda,R}(x,\tau)\dd x\dd \tau
\end{split}
\end{equation}

\noindent Now choose $R>2\bigl(R_0+A(t)\bigr)$. Then, for every $\tau\in[s,t]$,
\[
\supp u(\tau,\cdot)\subset B_{R_0+A(\tau)}\subset B_{R_0+A(t)}\subset B_{R/2},
\]
and, hence, on the support of $u(\tau,\cdot)$, we have $\zeta_R\equiv1$, $\nabla\zeta_R\equiv0$ and  $\Delta\zeta_R\equiv0$. In particular, the error term in \eqref{eq:Q-weak-cutoff} vanishes. Furthermore,
$Q_{\lambda,R}(\tau)=Q_\lambda(\tau)$ for all $\tau\in[s,t]$,
and $\Phi_{\lambda,R}(\cdot,\tau)=\Phi_\lambda(\cdot,\tau)$
 on $\supp u(\tau,\cdot)$, for all $\tau\in[s,t]$.
Thus,
\[
-\int_s^t Q_\lambda(\tau)\theta'(\tau)\dd \tau
=
\int_s^t\theta(\tau)(1+\tau)^{-\alpha}\int_{\R^n}|u(x,\tau)|^p\Phi_\lambda(x,\tau)\dd x\dd \tau,
\]
for every $\theta\in C_c^{\infty}((s,t))$.
This proves that
\[
Q_\lambda'(\tau)=(1+\tau)^{-\alpha}\int_{\R^n}|u(x,\tau)|^p\Phi_\lambda(x,\tau)\dd x
\] in $\mathcal D'((0,T))$
and, hence, almost everywhere. Since the right-hand side belongs to
$L^1_{\loc}((0,T))$, we conclude that
\[
Q_\lambda\in W^{1,1}_{\loc}((0,T)).
\]
Integrating from $s$ to $t$ yields \eqref{eq:Q-weak-identity}. Since the
right-hand side is nonnegative, $Q_\lambda$ is nondecreasing on $(0,T)$. 
It remains to justify the monotonicity up to the initial time.  
We claim that $Q_\lambda$ admits a continuous extension to $t=0$, with
\[
Q_\lambda(0)
=
\varepsilon
\int_{\mathbb R^n}
\bigl(
m_\lambda(0)g(x)-m_\lambda'(0)f(x)
\bigr)\varphi_\lambda(x)\dd x .
\]
Indeed, fix $\delta\in(0,T)$. By the finite-propagation condition, we have, for every $t\in[0,\delta]$,
\[
\supp u(\cdot,t)\cup\supp u_t(\cdot,t)
\subset B_{R_0+A(\delta)}.
\]
Since $\varphi_\lambda\in L^\infty(B_{R_0+A(\delta)})$ and
$u\in C^1([0,T);L^1_{\rm loc}(\mathbb R^n))$, we have, as $t\downarrow0$,
\[
u(t)\to \varepsilon f
\quad \text{and} \quad
u_t(t)\to \varepsilon g
\quad\text{in }L^1(B_{R_0+A(\delta)}),
\] that, together with the continuity of $m_\lambda$ and $m_\lambda'$, gives
\[
Q_\lambda(t)\to Q_\lambda(0)
=
\varepsilon\int_{\mathbb R^n}
\bigl(m_\lambda(0)g(x)-m_\lambda'(0)f(x)\bigr)
\varphi_\lambda(x)\dd x .
\]

Therefore, the identity \eqref{eq:Q-weak-identity}, already proved for $0<s<t<T$,
can be passed to the limit $s\downarrow0$. Since the integrand is
nonnegative and locally integrable in time, the monotone convergence theorem
yields
\[
Q_\lambda(t)-Q_\lambda(0)
=
\int_0^t\int_{\mathbb R^n}
(1+\tau)^{-\alpha}|u(x,\tau)|^p\Phi_\lambda(x,\tau)\dd x\dd \tau
\ge0.
\]
Consequently,
$Q_\lambda(t)\ge Q_\lambda(0)$,
for every $t\in[0,T)$,
and $Q_\lambda$ is nondecreasing on the interval $[0,T)$.
\end{proof}

The preceding monotonicity formula shows that $Q_\lambda$ can only increase. We now use the sign assumptions on the initial data to show that it starts from a strictly positive value.

\begin{lemma}\label{lem:Q-positive}
Let $u$ be a weak solution of \eqref{eq:main2} on $[0,T)$. There exists $c_0=c_0(\lambda,f,g)>0$ such that
$Q_\lambda(t)\ge c_0\eps$, for all $t\in[0,T)$.
\end{lemma}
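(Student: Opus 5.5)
The plan is to combine the monotonicity statement of \Cref{lem:Q-monotone} with an explicit computation of the initial value $Q_\lambda(0)$. By \Cref{lem:Q-monotone}, $Q_\lambda$ is nondecreasing on $[0,T)$. Its continuous extension to $t=0$ is
\[
Q_\lambda(0)=\varepsilon\int_{\R^n}\bigl(m_\lambda(0)g(x)-m_\lambda'(0)f(x)\bigr)\varphi_\lambda(x)\dd x .
\]
Therefore $Q_\lambda(t)\ge Q_\lambda(0)$ for every $t\in[0,T)$. It then suffices to set
\[
c_0:=\int_{\R^n}\bigl(m_\lambda(0)g(x)-m_\lambda'(0)f(x)\bigr)\varphi_\lambda(x)\dd x
\]
and show $c_0>0$. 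This constant depends only on $\lambda$ (through $m_\lambda$, $\varphi_\lambda$) and on $f,g$, as required.

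For positivity I will use the sign information from \Cref{prop:adjoint-profile}, namely $m_\lambda(0)>0$ and $-m_\lambda'(0)>0$. Next I check that $\varphi_\lambda(x)>0$ for every $x$. This holds because $\varphi_\lambda$ is an integral over $\mathbb S^{n-1}$ of the strictly positive function $e^{\lambda x\cdot\omega}$; for $n=1$ it reduces to $2\cosh(\lambda x)>0$. Since $\varphi_\lambda$ is continuous, it is bounded below by a positive constant on the compact set $\overline{B_{R_0}}$, which contains the supports of $f$ and $g$.

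The integrand $m_\lambda(0)g\varphi_\lambda+(-m_\lambda'(0))f\varphi_\lambda$ is therefore nonnegative and continuous. Because $(f,g)\not\equiv(0,0)$ and both are continuous and nonnegative, at least one of $f,g$ is strictly positive on some nonempty open set. On that set the integrand is strictly positive, so $c_0>0$.

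There is no real obstacle here. The only point requiring care is that the identity $Q_\lambda(t)\ge Q_\lambda(0)$ must hold at every $t$, including $t=0$, and not just almost everywhere. This is exactly what the continuous extension at $t=0$ and the limit $s\downarrow0$ in the proof of \Cref{lem:Q-monotone} provide, so I will simply invoke that conclusion.
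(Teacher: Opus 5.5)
Your proposal is correct and follows the paper's proof essentially verbatim: it identifies $c_0$ with the same integral (so $Q_\lambda(0)=c_0\varepsilon$), gets positivity from $m_\lambda(0)>0$, $-m_\lambda'(0)>0$, $\varphi_\lambda>0$ and $(f,g)\not\equiv(0,0)$, and concludes from the monotonicity of $Q_\lambda$ on $[0,T)$ established in \Cref{lem:Q-monotone}. Your extra justification that the continuous nonnegative integrand is strictly positive on a nonempty open set is a welcome detail that the paper leaves implicit.
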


\begin{proof}
Since $m_\lambda(0)>0$, $-m_\lambda'(0)>0$,  $\varphi_\lambda(x)>0$, and the initial conditions $f,g$ are both non-negative with $(f,g)\not\equiv(0,0)$, it follows that 
\[c_0:=\int_{\mathbb R^n}
\bigl(m_\lambda(0)g(x)-m_\lambda'(0)f(x)\bigr)
\varphi_\lambda(x)\dd x>0.\]
Moreover, we have 
$Q_\lambda(0)= c_0\eps$.
Since $Q_\lambda$ is nondecreasing on $[0,T)$, the lemma follows.
\end{proof}

The positivity of $Q_\lambda$ yields a quantitative lower bound for the weighted spatial average of the solution.

\begin{proposition}\label{prop:F-growth}
Let $u$ be a weak solution of \eqref{eq:main2} on $[0,T)$. For $\lambda>|\mu|$, define the weighted spatial average
\[
F_\lambda(t):=\int_{\R^n}u(x,t)\varphi_\lambda(x)\dd x.
\]
Then, there exist constants $t_0\ge1$ and $C_0>0$ such that, for all $t\in[t_0,T)$,
\[
F_\lambda(t)\ge C_0\,\eps\,a(t)^{-1/2}e^{\lambda A(t)}.
\]
\end{proposition}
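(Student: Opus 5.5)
We prove this by turning the lower bound $Q_\lambda(t)\ge c_0\eps$ of \Cref{lem:Q-positive} into a first-order differential inequality for $F_\lambda$ and integrating it against the profile $m_\lambda$. Since $u\in C^1([0,T);L^1_{\loc})$ and, by \eqref{eq:finite-speed-weak}, $u(\cdot,t)$ and $u_t(\cdot,t)$ are supported in $B_{R_0+A(t)}$, the function $F_\lambda$ is $C^1$ with $F_\lambda'(t)=\int u_t\varphi_\lambda\dd x$. Hence $Q_\lambda=m_\lambda F_\lambda'-m_\lambda' F_\lambda$, which is a Wronskian. Because $m_\lambda>0$ on $[0,\infty)$ (\Cref{prop:adjoint-profile}), we can write
\[
\frac{d}{dt}\left(\frac{F_\lambda(t)}{m_\lambda(t)}\right)=\frac{Q_\lambda(t)}{m_\lambda(t)^2}\ge \frac{c_0\eps}{m_\lambda(t)^2}.
\]
Integrating from $0$ to $t$ and using $F_\lambda(0)=\eps\int f\varphi_\lambda\ge0$ gives
\[
F_\lambda(t)\ge c_0\eps\, m_\lambda(t)\int_0^t \frac{d\tau}{m_\lambda(\tau)^2}.
\]

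It remains to estimate this integral from below for large $t$. By \eqref{eq:m-bounds}, for $\tau\ge t_0$ we have $m_\lambda(\tau)^{-2}\ge C^{-2}a(\tau)e^{2\lambda A(\tau)}$. The key observation is that $a=A'$, so the integrand is an exact derivative:
\[
\int_{t_0}^t a(\tau)e^{2\lambda A(\tau)}\dd\tau=\frac{1}{2\lambda}\bigl(e^{2\lambda A(t)}-e^{2\lambda A(t_0)}\bigr).
\]
Since $A(t)\to\infty$, after enlarging $t_0$ if necessary so that $e^{2\lambda A(t)}\ge 2e^{2\lambda A(t_0)}$ for $t\ge t_0$ (the constants involved are still independent of $\eps$), this is at least $\frac{1}{4\lambda}e^{2\lambda A(t)}$. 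Combining with the lower bound $m_\lambda(t)\ge c\,a(t)^{-1/2}e^{-\lambda A(t)}$ from \eqref{eq:m-bounds}, we obtain
\[
F_\lambda(t)\ge \frac{c\,c_0}{4\lambda C^2}\,\eps\,a(t)^{-1/2}e^{\lambda A(t)},
\]
which is the claim with $C_0=c\,c_0/(4\lambda C^2)$. We discard the contribution of $[0,t_0]$, which is nonnegative.

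The only delicate step is justifying the quotient rule: we need $Q_\lambda=m_\lambda F_\lambda'-m_\lambda'F_\lambda$ pointwise and $F_\lambda/m_\lambda$ absolutely continuous. Both follow from $u\in C^1([0,T);L^1_{\loc})$ together with the uniform support bound on compact time intervals, exactly as in the continuity-at-zero argument in the proof of \Cref{lem:Q-monotone}. This is routine, so the proof reduces to the ODE computation above.
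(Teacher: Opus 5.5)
Your proposal is correct and follows the paper's proof essentially step for step. Both use the identity $Q_\lambda=m_\lambda^2(F_\lambda/m_\lambda)'$ with $Q_\lambda\ge c_0\eps$, integrate from $0$ using $F_\lambda(0)\ge0$, apply \eqref{eq:m-bounds}, integrate exactly via $A'=a$, and finally enlarge the time threshold. One wording fix: you cannot enlarge $t_0$ itself so that $e^{2\lambda A(t)}\ge 2e^{2\lambda A(t_0)}$ for all $t\ge t_0$, since this fails at $t=t_0$. Instead, keep the $t_0$ from \eqref{eq:m-bounds} as the lower limit of integration and take a new threshold $t_1\ge t_0$ with $2\lambda\bigl(A(t_1)-A(t_0)\bigr)\ge\log 2$; this is exactly what the paper does before renaming $t_1$ as $t_0$.
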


\begin{proof}
By the finite propagation property, the integral defining $F_\lambda(t)$ is finite and so it is well defined. By definition,
\[
Q_\lambda(t)
=
m_\lambda(t)F_\lambda'(t)-m_\lambda'(t)F_\lambda(t)
=
m_\lambda(t)^2\biggl(\frac{F_\lambda(t)}{m_\lambda(t)}\biggr)'.
\]
Hence, by \Cref{lem:Q-positive},
\begin{equation}\label{eq:diff-F/m}
\biggl(\frac{F_\lambda(t)}{m_\lambda(t)}\biggr)'
=
\frac{Q_\lambda(t)}{m_\lambda(t)^2}
\ge
\frac{c_0\eps}{m_\lambda(t)^2}.    
\end{equation}
Integrating from $0$ to $t$, we obtain
\[
\frac{F_\lambda(t)}{m_\lambda(t)}
\ge
\frac{F_\lambda(0)}{m_\lambda(0)}
+
c_0\eps\int_0^t \frac{1}{m_\lambda(s)^2} \dd s.
\]
Since $F_\lambda(0)=\eps\int f\varphi_\lambda\ge0$, this yields
\[
F_\lambda(t)\ge c_0\eps\,m_\lambda(t)\int_0^t \frac{1}{m_\lambda(s)^2} \dd s.
\]
By \eqref{eq:m-bounds}, for all $t$ large enough and $s\in[t_0,t]$, we have
\[
m_\lambda(t)\ge c\,a(t)^{-1/2}e^{-\lambda A(t)}
\quad \text{and} \quad
m_\lambda(s)^{-2}\ge c\,a(s)e^{2\lambda A(s)}.
\]
Then,
\[
F_\lambda(t)
\ge c_0\eps\,m_\lambda(t)\int_{t_0}^t\frac{1}{m_\lambda(s)^2} \dd s
\ge
c\,\eps\,a(t)^{-1/2}e^{-\lambda A(t)}
\int_{t_0}^t a(s)e^{2\lambda A(s)}\dd s.
\]
Recall $A'(s)=a(s)$, whence
\[
F_\lambda(t)
\ge
\frac{c\eps}{2\lambda}\,
a(t)^{-1/2}e^{\lambda A(t)}
\Bigl(1-e^{-2\lambda(A(t)-A(t_0))}\Bigr).
\]
Now, since $A(t)\to+\infty$ as $t\to+\infty$, there exists $t_1\ge t_0$ such that, for every $t\ge t_1$,
\[
1-e^{-2\lambda(A(t)-A(t_0))}\ge \frac12\,;
\]
therefore, for all $t\ge t_1$,
\[
F_\lambda(t)
\ge
\frac{c\eps}{4\lambda}\,
a(t)^{-1/2}e^{\lambda A(t)}.
\]
Renaming $t_1$ as $t_0$, this ends the proof with
$C_0:=c/(4\lambda)$.
\end{proof}

In what follows, we also need the following standard pointwise estimate for $\varphi_\lambda$. This bound appears for instance in \cite[Lemma 2.2]{WakaYor19}; see also
\cite{HeWiY2017,HeWiY2021,Palmieri2025,YZ2006}. For the sake of completeness, we include the proof.

\begin{lemma}\label{lem:phi-bound}
Let $\varphi_\lambda$ be the function defined in \ref{def:varphi_lambda}. Then, there exists $C=C(n,\lambda)>0$ such that, for all $x\in\R^n$,
\begin{equation}\label{eq:phi-pointwise}
\varphi_\lambda(x)\le C(1+|x|)^{-\frac{n-1}{2}}e^{\lambda|x|}.
\end{equation} 
\end{lemma}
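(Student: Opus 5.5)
The plan is to use the rotational symmetry of the integrand and reduce the estimate to a one-dimensional Laplace-type integral over the sphere. Write $x=r\theta$ with $r=|x|$ and $\theta\in\mathbb S^{n-1}$. By rotation invariance of the surface measure, $\varphi_\lambda(x)=\int_{\mathbb S^{n-1}}e^{\lambda r\,\omega_1}\dd S(\omega)$, so $\varphi_\lambda$ is radial. The case $n=1$ is immediate: there $\varphi_\lambda(x)=e^{\lambda x}+e^{-\lambda x}\le 2e^{\lambda|x|}$, and the weight $(1+|x|)^{0}$ is trivial. Similarly, for $|x|\le 1$ and any $n$, the crude bound $\varphi_\lambda(x)\le |\mathbb S^{n-1}|e^{\lambda}$ suffices, after adjusting the constant. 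So it remains to treat $n\ge2$ and $r\ge1$.

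For $n\ge2$, I will parametrize the sphere by $\omega_1=s\in[-1,1]$. This gives
\[
\varphi_\lambda(x)=|\mathbb S^{n-2}|\int_{-1}^{1}e^{\lambda r s}(1-s^2)^{\frac{n-3}{2}}\dd s,
\]
with the usual interpretation when $n=2$, where the weight is $(1-s^2)^{-1/2}$ and $|\mathbb S^0|=2$. Factoring out $e^{\lambda r}$ and substituting $s=1-\sigma/r$, the integral becomes
\[
e^{\lambda r}\,r^{-1}\int_0^{2r}e^{-\lambda\sigma}\Bigl(\frac{\sigma}{r}\Bigr)^{\frac{n-3}{2}}\Bigl(2-\frac{\sigma}{r}\Bigr)^{\frac{n-3}{2}}\dd\sigma .
\]
Collecting the powers of $r$ gives the prefactor $e^{\lambda r}r^{-1-\frac{n-3}{2}}=e^{\lambda r}r^{-\frac{n-1}{2}}$. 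The remaining integrand is $e^{-\lambda\sigma}\sigma^{\frac{n-3}{2}}(2-\sigma/r)^{\frac{n-3}{2}}$.

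The key step is to bound $\int_0^{2r}e^{-\lambda\sigma}\sigma^{\frac{n-3}{2}}(2-\sigma/r)^{\frac{n-3}{2}}\dd\sigma$ uniformly in $r\ge1$. I split according to the sign of the exponent.
\begin{itemize}
\item If $n\ge3$, then $(2-\sigma/r)^{\frac{n-3}{2}}\le 2^{\frac{n-3}{2}}$. The integral is therefore bounded by $2^{\frac{n-3}{2}}\Gamma\bigl(\tfrac{n-1}{2}\bigr)\lambda^{-\frac{n-1}{2}}$.
\item If $n=2$, the factor $(2-\sigma/r)^{-1/2}$ is singular at $\sigma=2r$. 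I split the integral at $\sigma=r$. On $[0,r]$ the factor is at most $1$, and the integral is bounded by $\int_0^\infty e^{-\lambda\sigma}\sigma^{-1/2}\dd\sigma<\infty$. On $[r,2r]$, I bound $e^{-\lambda\sigma}\sigma^{-1/2}\le e^{-\lambda r}$ and change variables $\sigma=r\tau$. This gives at most $e^{-\lambda r}\,r\int_1^2(2-\tau)^{-1/2}\dd\tau=2re^{-\lambda r}$, which is bounded for $r\ge1$.
\end{itemize}

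Combining the two regimes, $\varphi_\lambda(x)\le C r^{-\frac{n-1}{2}}e^{\lambda r}$ for $r\ge1$. Since $r\ge1$ implies $r^{-\frac{n-1}{2}}\le 2^{\frac{n-1}{2}}(1+r)^{-\frac{n-1}{2}}$, this is the claimed form. Together with the bound for $|x|\le1$, this yields \eqref{eq:phi-pointwise}.

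The only step requiring care is the endpoint singularity when $n=2$. There the weight $(1-s^2)^{-1/2}$ blows up at $s=-1$, but that endpoint carries the exponentially small factor $e^{-\lambda r}$, so the split above handles it.
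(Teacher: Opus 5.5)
Your proof is correct and follows essentially the same route as the paper: you reduce to $|\mathbb S^{n-2}|\int_{-1}^{1}e^{\lambda rs}(1-s^2)^{\frac{n-3}{2}}\dd s$ and rescale $1-s$ to obtain a Gamma-function bound, using $(1+s)^{\frac{n-3}{2}}\le 2^{\frac{n-3}{2}}$ when $n\ge3$. For $n=2$ you split at $s=0$ (your $\sigma=r$), where the half near the singular endpoint $s=-1$ is harmless because $e^{\lambda rs}\le1$ there; the only cosmetic difference is that you handle $n=2$ and $n\ge3$ within a single substitution, while the paper writes the two cases out separately.
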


\begin{proof}
If $n=1$, then
\[
\varphi_\lambda(x)=e^{\lambda x}+e^{-\lambda x}\le 2e^{\lambda|x|},
\]
which is \eqref{eq:phi-pointwise} with $C=2$.

Assume now that $n=2$  and let $r:=|x|$. For $0\le r\le1$, we have $(1+r)^{-1/2}\ge 2^{-1/2}$ and so
\[
\varphi_\lambda(x)
\le
|\mathbb S^1|e^\lambda
\le
C_\lambda (1+r)^{-1/2}e^{\lambda r}.
\] If, on the other hand, $r\ge1$, by rotational invariance and the spherical-coordinate 
formula, we decompose the integral as
\[
\varphi_\lambda(x)
=
|\mathbb S^0|
\int_{-1}^{1}
e^{\lambda r s}(1-s^2)^{-1/2}\,\dd s =
I_-(r)+I_+(r),
\]
where
\[
I_-(r):=
\int_{-1}^{0}
e^{\lambda r s}(1-s^2)^{-1/2}\,\dd s
\quad \text{ and } \quad I_+(r):=
\int_{0}^{1}
e^{\lambda r s}(1-s^2)^{-1/2}\,\dd s.\]
On interval $[-1,0]$, since $e^{\lambda r s}\le1$ and $d_\lambda:=\inf_{r\ge1} r^{-1/2}e^{\lambda r}>0$, we have
\[
I_-(r)
\le
\int_{-1}^{0}(1-s^2)^{-1/2}\,\dd s
=
\frac{\pi}{2}
\le \frac{\pi}{2d_\lambda} \, r^{-1/2}e^{\lambda r}.
\]
Next,
on the interval $[0,1]$, observe $1-s^2=(1-s)(1+s)\ge 1-s$. Hence,
\[
I_+(r)
\le
\int_0^1 e^{\lambda r s}(1-s)^{-1/2}\,\dd s
\]
and the change of variables $y=\lambda r(1-s)$ implies
\[
I_+(r)
\le
e^{\lambda r}(\lambda r)^{-1/2}
\int_0^{\lambda r} e^{-y}y^{-1/2}\,\dd y
\le 
e^{\lambda r}(\lambda r)^{-1/2}\, \Gamma\left(\frac12\right) = \sqrt{\pi} \, \lambda^{-1/2} \, r^{-1/2} e^{\lambda r}.
\]
Combining the estimates for $I_-(r)$ and $I_+(r)$ in the case $r \ge 1$, we obtain
$\varphi_\lambda(x)
\le
C_\lambda r^{-1/2}e^{\lambda r}$.
Since $r^{-1/2}\le \sqrt2\,(1+r)^{-1/2}$ when $r\ge 1$, we have
\[
\varphi_\lambda(x)
\le
C_\lambda (1+r)^{-1/2}e^{\lambda r}
=
C_\lambda (1+|x|)^{-1/2}e^{\lambda |x|}.
\]
Together with the case $0\le r\le1$, this proves \eqref{eq:phi-pointwise} for $n=2$.

Finally, assume that $n\ge3$ and again we split the analysis into the cases $0\le r\le1$ and $r\ge1$, where $r:=|x|$. 
If $0\le r\le1$, we use the definition of $\varphi_\lambda(x)$ to obtain
\[
\varphi_\lambda(x) \le |\mathbb S^{n-1}|e^\lambda = |\mathbb S^{n-1}|e^\lambda 2^{\frac{n-1}{2}} 2^{-\frac{n-1}{2}}=: C(n,\lambda) \, 2^{-\frac{n-1}{2}} \le C (1+r)^{-\frac{n-1}{2}} \le C(1+r)^{-\frac{n-1}{2}}e^{\lambda r}.
\] If, on the other hand, $r\ge1$,
by rotational invariance, we may write $x=re_n$ and, then, the spherical-coordinate formula gives
\[
\varphi_\lambda(x)
=
|\mathbb S^{n-2}|
\int_{-1}^{1} e^{\lambda r s}(1-s^2)^{\frac{n-3}{2}}\dd s.
\] Since $1+s\le2$ for $s \in [-1,1]$ and $n \ge 3$, we note that
\[
(1-s^2)^{\frac{n-3}{2}}
=
(1+s)^{\frac{n-3}{2}}
(1-s)^{\frac{n-3}{2}} \le
C\,(1-s)^{\frac{n-3}{2}};
\]
hence,
\[
\varphi_\lambda(x)
\le
C\int_{-1}^{1} e^{\lambda r s}(1-s)^{\frac{n-3}{2}}\dd s.
\]
Now, the change of variables $y=\lambda r(1-s)$ gives
\begin{align*}
\varphi_\lambda(x)
&\le
C e^{\lambda r}(\lambda r)^{-\frac{n-1}{2}}
\int_{0}^{2\lambda r} e^{-y}y^{\frac{n-3}{2}}\dd y \\
&\le C \, \lambda^{-\frac{n-1}{2}} \, \Gamma \left(\frac{n-1}{2} \right) r^{-\frac{n-1}{2}} \, e^{\lambda r}.
\end{align*}
Therefore, in any case, we obtain
\[
\varphi_\lambda(x)\le C\, r^{-\frac{n-1}{2}}e^{\lambda r}.
\]
As before, this yields \eqref{eq:phi-pointwise}.
\end{proof}

In order to convert the weighted lower bound for $F_\lambda$ into a lower
bound for $H$, as defined in \ref{def:H}, we apply Hölder's
inequality on a ball containing the support of $u(t,\cdot)$. For this purpose,
we need the following $L^{p'}$ estimate for $\varphi_\lambda$ on balls.

\begin{lemma}\label{lem:phiLp}
Let $p'$ be the Hölder conjugate exponent of $p>1$. Then, there exists a constant $C=C(n,p,\lambda)>0$ such that, for every $\rho\ge1$,
\begin{equation}\label{eq:phiLp-bound}
\norm{\varphi_\lambda}_{L^{p'}(B_\rho)}^{p'}
\le
C\,\rho^{\,n-1-\frac{n-1}{2}p'}e^{p'\lambda\rho}.
\end{equation}
\end{lemma}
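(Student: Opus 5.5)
We start from the pointwise bound of \Cref{lem:phi-bound}, raise it to the power $p'$, and integrate over $B_\rho$ in polar coordinates:
\[
\norm{\varphi_\lambda}_{L^{p'}(B_\rho)}^{p'}
\le C\int_{B_\rho}(1+|x|)^{-\frac{n-1}{2}p'}e^{p'\lambda|x|}\dd x
= C|\mathbb S^{n-1}|\int_0^\rho (1+r)^{-\frac{n-1}{2}p'}r^{n-1}e^{p'\lambda r}\dd r,
\]
with the convention $|\mathbb S^0|=2$ when $n=1$. Since $r^{n-1}\le(1+r)^{n-1}$, the integrand is bounded by $(1+r)^{k}e^{p'\lambda r}$ with $k:=n-1-\frac{n-1}{2}p'$. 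This exponent $k$ may have either sign.

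We split the $r$-integral at $\rho/2$ (recall $\rho\ge1$). On $[0,\rho/2]$ the weight $(1+r)^{k}$ is at most $\max\{1,(1+\rho)^{|k|}\}$, a polynomial factor. Hence this piece is bounded by $C(1+\rho)^{|k|}e^{p'\lambda\rho/2}$. That quantity is dominated by $C\rho^{k}e^{p'\lambda\rho}$, because $e^{-p'\lambda\rho/2}$ beats any power of $\rho$ uniformly for $\rho\ge1$.

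On $[\rho/2,\rho]$ we have $1+r\asymp\rho$, with constants depending only on the sign of $k$, so $(1+r)^{k}\le C\rho^{k}$. The remaining factor integrates explicitly:
\[
\int_{\rho/2}^{\rho}e^{p'\lambda r}\dd r\le \frac{1}{p'\lambda}e^{p'\lambda\rho}.
\]
This piece is therefore bounded by $C\rho^{k}e^{p'\lambda\rho}$. Adding the two pieces gives \eqref{eq:phiLp-bound}, with a constant depending on $n$, $p$ and $\lambda$.

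The only point needing care is the handling of a possibly positive or negative exponent $k$ on the inner region. The splitting at $\rho/2$ resolves this, since the exponential gap $e^{-p'\lambda\rho/2}$ absorbs any polynomial loss there.
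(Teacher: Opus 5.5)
Your proof is correct and follows essentially the same route as the paper. Both arguments pass to polar coordinates after the pointwise bound of \Cref{lem:phi-bound} and split the radial integral at $\rho/2$: on the outer half $1+r\asymp\rho$, and on the inner half the factor $e^{-p'\lambda\rho/2}$ absorbs any polynomial loss. The only difference is cosmetic. Because you keep the weight in the form $(1+r)^k$ rather than $r^a$, you can skip the paper's separate treatment of $r\in[0,1]$ and of the range $1\le\rho\le 2$.
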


\begin{proof}
First, we prove the following elementary estimate: for $a\in\R$ and $b>0$, there exists $C_{a,b}>0$ such that, for every $\rho\ge1$,
\begin{equation}\label{eq:laplace-elem}
 \int_1^\rho r^a e^{b r}\,\dd r
 \le C_{a,b}\rho^a e^{b\rho}.
\end{equation}
For $1\le \rho\le2$, the left-hand side is bounded by
\[
M_{a,b}:=\int_1^2 r^a e^{b r}\,\dd r<\infty,
\]
whereas $\rho^a e^{b\rho}$ has a positive minimum on $[1,2]$.
Hence the desired estimate follows by choosing for example $C_{a,b} :=  (\min_{1\le\rho\le 2}\rho^ae^{b\rho})^{-1} M_{a,b}$.

If $\rho\ge2$, split the integral into the intervals $[1,\rho/2]$ and $[\rho/2,\rho]$. On the interval $[\rho/2,\rho]$, we have $r^a\le C\rho^a$, for all $a\in\R$ and, hence, this part is bounded by $C\rho^a e^{b\rho}$. On the other hand, on $[1,\rho/2]$, we have
\[
 \int_1^{\rho/2}r^a e^{b r}\,\dd r
 \le C(1+\rho^{\max\{a,0\}})e^{b\rho/2}
 \le C\rho^a e^{b\rho},
\]
the last inequality valid since $\rho^{-a}e^{-b\rho/2}$ is bounded for $\rho\ge2$ in the case $a<0$. This proves \eqref{eq:laplace-elem}.

\smallskip

Next, from \Cref{lem:phi-bound}, we know that, for all $x\in\R^n$,
\[
\varphi_\lambda(x)^{p'}
\le
C(1+|x|)^{-\frac{n-1}{2}p'}e^{p'\lambda|x|}.
\]
We integrate on $B_\rho$ and we use polar coordinates to obtain
\[
\norm{\varphi_\lambda}_{L^{p'}(B_\rho)}^{p'}
\le
C\int_0^\rho
r^{n-1}(1+r)^{-\frac{n-1}{2}p'}
e^{p'\lambda r}\,\dd r.
\]
The idea is now to apply \eqref{eq:laplace-elem} with $
a:=n-1-(n-1)p'/2$ and 
$b:=p'\lambda$.
We split the integral into the regions $0\le r\le1$ and $1\le r\le\rho$.
For the first part,
\[
\int_0^1
r^{n-1}(1+r)^{-\frac{n-1}{2}p'}
e^{p'\lambda r}\,\dd r
\le C.
\]
On the other hand, the function
$\rho\mapsto \rho^a e^{b\rho}$
is continuous, strictly positive on $[1,\infty)$, and tends to
$+\infty$ as $\rho\to+\infty$. So, it has a positive minimum on
$[1,\infty)$. Hence,
$C\le C\,\rho^a e^{b\rho}$, for every $\rho\ge1$.

For the second part, since $\rho\ge1$,
\[
\int_1^\rho
r^{n-1}(1+r)^{-\frac{n-1}{2}p'}
e^{p'\lambda r}\,\dd r
\le
C\int_1^\rho r^a e^{br}\,\dd r.
\]
By \eqref{eq:laplace-elem}, for every $\rho\ge1$,
\[
\int_1^\rho r^a e^{br}\,\dd r
\le
C_{a,b} \ \rho^a \, e^{b\rho}.
\]
Combining the two estimates, we conclude \eqref{eq:phiLp-bound}:
\[
\norm{\varphi_\lambda}_{L^{p'}(B_\rho)}^{p'}
\le
C\,\rho^a e^{b\rho}
=
C\,\rho^{\,n-1-\frac{n-1}{2}p'}e^{p'\lambda\rho}. \qedhere
\]
\end{proof}

Finally, we can obtain a pointwise lower bound estimate for the quantity $H$.

\begin{proposition}\label{prop:H-pointwise}
Let $u$ be a weak solution of \eqref{eq:main2} on $[0,T)$ and $H$ as defined in \ref{def:H}.
There exist constants $t_1\ge1$ and $C_1>0$, independent of $T$,
such that
\begin{equation}\label{eq:H-pointwise}
H(t)
\ge
C_1\,\eps^p\,
(1+t)^{-\frac{\ell p}{2}}
(1+A(t))^{-\frac{n-1}{2}(p-2)}
\end{equation}
for a.e. $t\in[t_1,T)$.
\end{proposition}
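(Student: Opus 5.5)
The plan is to combine the lower bound for $F_\lambda$ from \Cref{prop:F-growth} with Hölder's inequality on the support ball, and then use \Cref{lem:phiLp} to control the dual norm of $\varphi_\lambda$.

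Fix $\lambda>|\mu|$ and set $\rho(t):=R_0+A(t)$. By \eqref{eq:finite-speed-weak}, $u(\cdot,t)$ is supported in $B_{\rho(t)}$. Since $A(t)\to\infty$, we may take $t_1\ge t_0$ so large that $\rho(t)\ge1$ for $t\ge t_1$. For a.e. $t\ge t_1$, Hölder's inequality gives
\[
C_0\eps\, a(t)^{-1/2}e^{\lambda A(t)}\le F_\lambda(t)\le \int_{B_{\rho(t)}}|u|\varphi_\lambda\dd x\le H(t)^{1/p}\norm{\varphi_\lambda}_{L^{p'}(B_{\rho(t)})}.
\]
Raising this to the power $p$, we obtain
\[
H(t)\ge C\eps^p a(t)^{-p/2}e^{p\lambda A(t)}\norm{\varphi_\lambda}_{L^{p'}(B_{\rho(t)})}^{-p}.
\]

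Next, \Cref{lem:phiLp} gives
\[
\norm{\varphi_\lambda}_{L^{p'}(B_\rho)}^{p}\le C\rho^{\frac{p}{p'}(n-1)-\frac{n-1}{2}p}e^{p\lambda\rho}.
\]
Since $p/p'=p-1$, the exponent of $\rho$ is
\[
(n-1)\Bigl(p-1-\frac p2\Bigr)=\frac{n-1}{2}(p-2).
\]
With $\rho=R_0+A(t)$, the exponential factors give $e^{p\lambda A(t)}e^{-p\lambda(R_0+A(t))}=e^{-p\lambda R_0}$, which is a constant. Moreover $R_0+A(t)\asymp 1+A(t)$ for $t\ge t_1$, with constants depending only on $R_0$; this holds for either sign of the exponent. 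Finally, $a(t)^{-p/2}=(1+t)^{-\ell p/2}$. Putting these together yields \eqref{eq:H-pointwise}, with $C_1$ depending on $n,p,\lambda,\ell,\mu,R_0,f,g$ but not on $T$. This independence from $T$ holds because $t_0$ and $C_0$ from \Cref{prop:F-growth} come only from the profile bounds in \Cref{prop:adjoint-profile} and from $c_0$.

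The only delicate point is the exact cancellation of the exponentials. The growth $e^{\lambda A(t)}$ of $F_\lambda$ must match the growth $e^{\lambda\rho}$ of $\norm{\varphi_\lambda}_{L^{p'}(B_\rho)}$ up to the constant factor $e^{-\lambda R_0}$. This works because the support radius is $R_0+A(t)$ with the same $A$, and this is why the weighted lower bound must be taken in exactly the form of \Cref{prop:F-growth}. Everything else is bookkeeping of polynomial powers.
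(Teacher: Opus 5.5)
Your proposal is correct and follows the paper's proof essentially step for step: Hölder's inequality on the support ball $B_{R_0+A(t)}$, the lower bound of \Cref{prop:F-growth}, \Cref{lem:phiLp} with $\rho=R_0+A(t)\ge 1$, cancellation of the exponentials up to the constant $e^{-p\lambda R_0}$, and the two-sided comparability $R_0+A(t)\asymp 1+A(t)$ to handle either sign of the exponent $-\frac{n-1}{2}(p-2)$. The only cosmetic difference is that you get the positivity of $F_\lambda$ needed to raise the inequality to the power $p$ directly from the positive lower bound on $[t_1,T)$. The paper instead argues positivity on all of $[0,T)$ separately, from the monotonicity of $F_\lambda/m_\lambda$.
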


\begin{proof}
Let $t_0\ge1$ be the constant given by \Cref{prop:F-growth}.
Enlarging $t_0$, if necessary, we may also assume that
$R_0+A(t)\ge1$
 for all $t\ge t_0$,
so that \Cref{lem:phiLp} can be applied with
$\rho=R_0+A(t)$.

Set $t_1:=t_0$. If $T\le t_1$, there is nothing to prove. Thus, we may
assume that $T>t_1$.
For a.e. $t\in[t_1,T)$, \eqref{eq:finite-speed-weak} and Hölder's
inequality imply
\[
F_\lambda(t)
=
\int_{\R^n}u(x,t)\varphi_\lambda(x)\dd x
\le
H(t)^{1/p}
\norm{\varphi_\lambda}_{L^{p'}(B_{R_0+A(t)})}.
\]
As in \eqref{eq:diff-F/m}, we have that, for all $t\in[0,T)$,
\[\left(\frac{F_\lambda(t)}{m_\lambda(t)}\right)'
=
\frac{Q_\lambda(t)}{m_\lambda(t)^2}
\ge
\frac{c_0\eps}{m_\lambda(t)^2}
> 0.\]
Since $F_\lambda(0)\ge 0$, we obtain that $F_\lambda(t)> 0$, for all $t\in[0,T)$, and then
\[
H(t)
\ge
\frac{F_\lambda(t)^p}
{\norm{\varphi_\lambda}_{L^{p'}(B_{R_0+A(t)})}^{p}}.
\]
From \Cref{prop:F-growth}, there exists $t_1\ge 1$ such that, for all $t\in[t_1,T)$,
\[
F_\lambda(t)\ge C_0\,\eps\, a(t)^{-1/2}e^{\lambda A(t)}.
\]
Moreover, by \Cref{lem:phiLp} with $\rho=R_0+A(t)$,
\[
\norm{\varphi_\lambda}_{L^{p'}(B_{R_0+A(t)})}^{\,p}
\le
C\,(R_0+A(t))^{\,\frac{p}{p'}
\left(n-1-\frac{n-1}{2}p'\right)}
e^{p\lambda(R_0+A(t))}.
\]
Hence, for a.e. $t\in[t_1,T)$,
\[
H(t)
\ge
C\,\eps^p\,
a(t)^{-p/2}\,
(R_0+A(t))^{-\frac{p}{p'}
\left(n-1-\frac{n-1}{2}p'\right)}
e^{-p\lambda R_0}.
\]
Since
\[
\frac{p}{p'}=p-1
\qquad
\text{and}
\qquad
(p-1)\left(n-1-\frac{n-1}{2}p'\right)
=
\frac{n-1}{2}(p-2),
\]
we have, for a.e. $t\in[t_1,T)$,
\[
H(t)
\ge
C\eps^p
a(t)^{-p/2}
(R_0+A(t))^{-\frac{n-1}{2}(p-2)}.
\]

Now, $R_0>0$ is fixed and $A(t)\ge0$; then, the constants
$c_{R_0}=\min\{1,R_0\}$ and $C_{R_0}=R_0+1$ are such that, for all $t \ge 0$,
\[
c_{R_0}(1+A(t))
\le
R_0+A(t)
\le
C_{R_0}(1+A(t)).
\]
Thus, for any $\vartheta\in\R$, there exists $C=C(\vartheta,R_0)$ such that, for all $t \ge 0$,
\[
(R_0+A(t))^\vartheta
\ge
C(1+A(t))^\vartheta.
\]
Taking
$\vartheta=-(n-1)(p-2)/2$,
we obtain, for a.e. $t\in[t_1,T)$,
\[
H(t)
\ge
C\eps^p
a(t)^{-p/2}
(1+A(t))^{-\frac{n-1}{2}(p-2)}.
\]

\noindent
Finally, recall $a(t)=(1+t)^\ell$ and the proof is complete.
\end{proof}

\section{Phase-localized bounds on logarithmic shells}\label{sec:log-shells}

In the oscillatory regime, the temporal equation
\begin{equation}\label{eq:h-ode-ce}
    h''(t)+\frac{\mu^2}{(1+t)^2}h(t)=0
\end{equation}
has a real solution
\begin{equation}\label{eq:h-sol}
    h(t)=(1+t)^{1/2}\cos\bigl(\omega\log(1+t)\bigr), \quad \text{where} \quad \omega:=\sqrt{\mu^2-\frac14}>0.
\end{equation} 
The oscillations are therefore periodic in the logarithmic variable
$\log(1+t)$ rather than in $t$ itself. Since this function changes sign,
it cannot serve as a global positive test function. We instead localize
near the time scales at which the cosine is positive and bounded away from
zero.
For $j\in\mathbb N$, set
\begin{equation}\label{eq:Rj}
R_j:=e^{2\pi j/\omega}-1 \quad \text{so that} \quad 
\omega\log(1+R_j)=2\pi j.    
\end{equation}
Then, $R_j$ corresponds to a maximum of the oscillatory factor. We measure the
logarithmic distance from this maximum by
\begin{equation}\label{eq:sigma_j}
\sigma_j(t):=\omega\log\frac{1+t}{1+R_j}
\end{equation}
and thus
\begin{equation}\label{eq:max-oscil}
    \cos\bigl(\omega\log(1+t)\bigr)
=
\cos\bigl(2\pi j+\sigma_j(t)\bigr)
=
\cos(\sigma_j(t)).
\end{equation}
In particular, on the set where $|\sigma_j(t)|<\pi/4$, this factor is
positive and uniformly bounded from below.

\medskip

Fix $\chi\in C_c^{\infty}(\R)$ such that
\[
0\le \chi\le1,\quad
\chi(s)=1 \ \text{for } |s|\le \frac{\pi}{8},\quad
\chi(s)=0 \ \text{for } |s|\ge \frac{\pi}{4}, \quad \text{and} \quad
\chi(s)>0 \ \text{for } |s|<\frac{\pi}{4}.
\] Moreover,
choose and fix an integer $k\ge 2p'$, and define
\begin{equation}\label{eq:eta_j}
\eta_j(t):=\chi(\sigma_j(t))^k.    
\end{equation}
Furthermore, we also introduce the logarithmic shell
\begin{equation}\label{eq:S_j}
S_j:=\{t>0:\ |\sigma_j(t)|<\pi/4\}
\end{equation}
and its inner core
\begin{equation}\label{eq:I_j}
I_j:=\{t>0:\ |\sigma_j(t)|\le \pi/8\}.
\end{equation}
Observe $\eta_j\equiv1$ on $I_j$, $\eta_j>0$ on $S_j$, and
$\supp\eta_j=\overline{S_j}$.

\medskip

The next lemma collects the geometric properties of these logarithmic shells.

\begin{lemma}\label{lem:shell-geometry}
There exist constants
$0<c_S<c_I<C_I<C_S$,
depending only on $\omega$, such that
\begin{equation}\label{eq:shell-placement}
 I_j\subset[c_IR_j,C_IR_j]
 \quad \text{and} \quad 
 S_j\subset[c_SR_j,C_SR_j],
\end{equation} for all $j\in \N$, where $R_j, S_j, I_j$ are the quantities defined in \eqref{eq:Rj},\eqref{eq:S_j} and \eqref{eq:I_j}.
Consequently, for all $j\in \N$,
\[
|I_j|\asymp R_j
\quad \text{and} \quad
|S_j|\asymp R_j,
\]
with implicit constants depending only on $\omega$. Moreover, 
\begin{equation}\label{eq:A-shell}
1+A(t)\asymp 1+A(R_j),    
\end{equation} for every
$t\in S_j$ and every $j\in \N$,
with implicit constants depending only on $\ell$ and $\omega$.
\end{lemma}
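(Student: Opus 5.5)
We start by making the shells explicit in the time variable. By \eqref{eq:sigma_j}, the condition $|\sigma_j(t)|<\pi/4$ is equivalent to
\[
e^{-\pi/(4\omega)}(1+R_j)<1+t<e^{\pi/(4\omega)}(1+R_j),
\]
and the condition $|\sigma_j(t)|\le\pi/8$ is the same statement with $\pi/8$ in place of $\pi/4$. Thus $S_j$ and $I_j$ are intervals (intersected with $t>0$) whose endpoints are $e^{\pm\theta/\omega}(1+R_j)-1$, with $\theta=\pi/4$ and $\theta=\pi/8$ respectively.

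The placement \eqref{eq:shell-placement} then reduces to comparing $1+R_j$ with $R_j$. For $j\ge1$ we have $R_j\ge R_1=e^{2\pi/\omega}-1>0$, so
\[
R_j\le 1+R_j\le\Bigl(1+\tfrac1{R_1}\Bigr)R_j,
\]
and these constants depend only on $\omega$. For the upper endpoints, $e^{\theta/\omega}(1+R_j)-1\le e^{\theta/\omega}(1+1/R_1)R_j$.

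The lower endpoints need more care, since subtracting $1$ could spoil positivity. We write
\[
e^{-\theta/\omega}(1+R_j)-1=R_j\Bigl(e^{-\theta/\omega}-\frac{1-e^{-\theta/\omega}}{R_j}\Bigr)\ge R_j\Bigl(e^{-\theta/\omega}-\frac{1-e^{-\theta/\omega}}{R_1}\Bigr).
\]
The bracket is positive because $e^{-\theta/\omega}R_1=e^{(2\pi-\theta)/\omega}-e^{-\theta/\omega}>1-e^{-\theta/\omega}$, which holds since $e^{(2\pi-\theta)/\omega}>1$ when $\theta<2\pi$. This yields $c_S,c_I,C_I,C_S$. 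The ordering $c_S<c_I<C_I<C_S$ is automatic for the natural choices, because $\theta=\pi/4$ gives the wider interval; if needed, we shrink or enlarge constants while preserving the inclusions.

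Next come the lengths. The shells lie in $t>0$ once the lower endpoint is positive, which we have just shown. The length of $S_j$ is then exactly $(e^{\pi/(4\omega)}-e^{-\pi/(4\omega)})(1+R_j)\asymp R_j$, and the same argument with $\pi/8$ handles $I_j$. The upper bounds on the lengths also follow directly from \eqref{eq:shell-placement}.

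For \eqref{eq:A-shell}, note that $1+A(t)$ is increasing in $t$. Hence for $t\in S_j$ it suffices to compare $1+A(c_SR_j)$ and $1+A(C_SR_j)$ with $1+A(R_j)$. Since $(\ell+1)(1+A(t))=(1+t)^{\ell+1}+\ell$ and $\ell\in(-1,0)$, we get
\[
\frac{(1+t)^{1+\ell}}{1+\ell}\ \ge\ 1+A(t)\ \ge\ (1+t)^{1+\ell},
\]
where the lower bound uses $(1+t)^{1+\ell}\ge1$ so that $(1+t)^{1+\ell}+\ell\ge(1+\ell)(1+t)^{1+\ell}$. Therefore $1+A(t)\asymp(1+t)^{1+\ell}$ with constants depending only on $\ell$.

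Finally, for $t\in S_j$ the ratio $(1+t)/(1+R_j)$ lies in $[e^{-\pi/(4\omega)},e^{\pi/(4\omega)}]$ by definition of $S_j$. Raising to the power $1+\ell$ gives \eqref{eq:A-shell}.

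The only delicate point is the positivity of the lower endpoint uniformly in $j$, which is handled by the convention $j\ge1$ (so $\N$ excludes $0$) and the inequality above. Everything else is direct computation.
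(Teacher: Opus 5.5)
Your proposal is correct and follows essentially the same route as the paper: the explicit endpoints $e^{\pm\theta/\omega}(1+R_j)-1$, the comparison $R_j\le 1+R_j\le(1+1/R_1)R_j$ with the same rewriting of the lower endpoint, the exact length formula $(e^{\theta/\omega}-e^{-\theta/\omega})(1+R_j)$, and finally $1+A(t)\asymp(1+t)^{1+\ell}$ combined with $1+t\asymp 1+R_j$ on $S_j$. Your preliminary remark about the monotonicity of $1+A$ is never used and can be dropped; your observation that the ordering $c_S<c_I<C_I<C_S$ holds automatically for the natural constants is correct.
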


\begin{proof}
If $t\in I_j$ and $\alpha_I:=\pi/(8\omega)$, then
\[
e^{-\alpha_I}(1+R_j)-1
< t < e^{\alpha_I}(1+R_j)-1.
\]
Since $j\in\N$, we have
$R_j\ge R_1=e^{2\pi/\omega}-1>0$;
thus,
\[
1+R_j
=
R_j\left(1+\frac1{R_j}\right)
\le
\left(1+\frac1{R_1}\right)R_j.
\]
Hence,
\[
t
\le
e^{\alpha_I}(1+R_j)
\le
e^{\alpha_I}\left(1+\frac1{R_1}\right)R_j = C_I R_j.
\]

For the lower bound, we write
\[
e^{-\alpha_I}(1+R_j)-1
=
e^{-\alpha_I}R_j-\bigl(1-e^{-\alpha_I}\bigr).
\]
Since $R_j\ge R_1$, we have
\[
1-e^{-\alpha_I}
\le
\frac{1-e^{-\alpha_I}}{R_1}R_j.
\]
Consequently,
\[
e^{-\alpha_I}(1+R_j)-1
\ge
\left(
e^{-\alpha_I}
-
\frac{1-e^{-\alpha_I}}{R_1}
\right)R_j = c_I R_j,
\]
where
\[
c_I := e^{-\alpha_I}
-
\frac{1-e^{-\alpha_I}}{R_1}
=
\frac{e^{-\alpha_I}(1+R_1)-1}{R_1}
=
\frac{e^{2\pi/\omega-\pi/(8\omega)}-1}{R_1}
>0.
\]
Therefore, for all $j \in \N$,
\[
I_j\subset[c_I R_j,C_I R_j].
\]

\smallskip

The proof for $S_j$ is identical, replacing $\alpha_I$ by
$\alpha_S := \pi/(4\omega)$. Hence, there exist positive constants $c_S,C_S$, depending
only on $\omega$, such that, for all $j \in \N$,
\[
S_j\subset[c_S R_j,C_S R_j].
\]

\smallskip

\noindent Since $\alpha_S>\alpha_I$, the shell $S_j$ is larger than $I_j$.
Thus, after possibly decreasing $c_S$ and increasing $C_S$, we may
choose the constants so that
$0<c_S<c_I<C_I<C_S$.
This proves \eqref{eq:shell-placement}.

\smallskip

We now prove the estimates for the lengths. We can describe
$I_j$ explicitly as
\[
I_j
=
\left(
e^{-\alpha_I}(1+R_j)-1,
e^{\alpha_I}(1+R_j)-1
\right);
\]
thus, 
$|I_j|
=
\left(e^{\alpha_I}-e^{-\alpha_I}\right)(1+R_j)$.
Since $j\in\N$ and $R_j\ge R_1>0$, we have
\[
R_j\le 1+R_j
\le
\left(1+\frac1{R_1}\right)R_j.
\]
It follows that, for all $j\in\N$,
\[
|I_j|\asymp R_j,
\]
with implicit constants depending only on $\omega$. The same argument, applied to $S_j$, gives, for all $j\in\N$,
\[
|S_j|
=
\left(e^{\alpha_S}-e^{-\alpha_S}\right)(1+R_j)
\asymp R_j.
\]

\smallskip

It remains to prove \eqref{eq:A-shell}. For $t\in S_j$,
\[
e^{-\alpha_S}
<
\frac{1+t}{1+R_j}
<
e^{\alpha_S},
\]
so that
$1+t\asymp 1+R_j$,
with constants depending only on $\omega$. Since $-1<\ell<0$,
\[
1+A(t)=\frac{(1+t)^{\ell+1}+\ell}{\ell+1},
\]
and $(1+s)^{\ell+1}\ge 1$, for all $s\ge0$, then
\[(1+\ell)(1+s)^{\ell+1} =(1+s)^{\ell+1}+\ell(1+s)^{\ell+1} < (1+s)^{\ell+1} +\ell < (1+s)^{\ell+1};
\]
thus,
\[(1+s)^{\ell+1} \le 1+A(s) < \frac{1}{(1+\ell)}(1+s)^{\ell+1}.\]
This means that $1+A(s)\asymp
(1+s)^{\ell+1}$, with constants depending only on $\ell$. Consequently,
\[
1+A(t)
\asymp
(1+t)^{\ell+1}
\asymp
(1+R_j)^{\ell+1}
\asymp
1+A(R_j).
\]
The implicit constants depend only on $\ell$ and $\omega$. This proves
\eqref{eq:A-shell}.
\end{proof}

The pointwise estimate \eqref{eq:H-pointwise} and \Cref{lem:shell-geometry} immediately yield a lower bound on shell averages.

\begin{proposition}[Lower bound on shells]\label{prop:shell-lower-weighted}
Let $u$ be a weak solution of
\eqref{eq:main2} on $[0,T)$, and let $t_1\ge1$ be given by \Cref{prop:H-pointwise}. Then, there exists a constant $C>0$,
independent of $\varepsilon$, $j$, and $T$, such that, for every
$j\in\N$ with $I_j\subset [t_1,T)$,
\begin{equation}\label{eq:shell-lower-weighted}
 \int_{I_j}(1+t)^{-\alpha}H(t)\,\dd t
 \ge
 C\varepsilon^p
 R_j^{1-\alpha-\ell p/2}
 (1+A(R_j))^{-\frac{n-1}{2}(p-2)}.
\end{equation}
\end{proposition}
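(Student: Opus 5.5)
The plan is to integrate the pointwise lower bound of \Cref{prop:H-pointwise} over the inner core $I_j$. By \Cref{lem:shell-geometry}, every weight appearing there is comparable to its value at $R_j$. No new analytic input is needed: the argument only converts $t$-dependent powers into $R_j$-dependent powers on the shell, with constants depending only on $\omega$, $\ell$, $n$, $p$, $\alpha$.

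Concretely, fix $j$ with $I_j\subset[t_1,T)$. Then \eqref{eq:H-pointwise} holds for a.e. $t\in I_j$, so
\[
\int_{I_j}(1+t)^{-\alpha}H(t)\dd t \ge C_1\eps^p\int_{I_j}(1+t)^{-\alpha-\frac{\ell p}{2}}(1+A(t))^{-\frac{n-1}{2}(p-2)}\dd t .
\]

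\textbf{Step 1: the power of $1+t$.} For $t\in I_j\subset S_j$ we have $e^{-\pi/(8\omega)}\le (1+t)/(1+R_j)\le e^{\pi/(8\omega)}$. Combined with $R_j\le 1+R_j\le (1+1/R_1)R_j$, this gives $1+t\asymp R_j$ with constants depending only on $\omega$. Hence, for any real exponent $\vartheta$ (here $\vartheta=-\alpha-\ell p/2$, whose sign we do not need to know), $(1+t)^\vartheta\ge c\,R_j^\vartheta$ with $c=c(\omega,\vartheta)$. The two-sided comparability is exactly what makes the sign of $\vartheta$ irrelevant.

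\textbf{Step 2: the power of $1+A(t)$.} By \eqref{eq:A-shell}, $1+A(t)\asymp 1+A(R_j)$ on $S_j\supset I_j$. Raising this to the exponent $-\frac{n-1}{2}(p-2)$, which may have either sign (and vanishes when $n=1$), again gives a lower bound $c\,(1+A(R_j))^{-\frac{n-1}{2}(p-2)}$. Here $c$ depends only on $\ell,\omega,n,p$.

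\textbf{Step 3: the length of the interval.} Pulling these two factors out of the integral leaves $|I_j|$. By \Cref{lem:shell-geometry}, $|I_j|\asymp R_j$, so $|I_j|\ge cR_j$. Multiplying the three bounds gives
\[
C\eps^p R_j^{1-\alpha-\ell p/2}(1+A(R_j))^{-\frac{n-1}{2}(p-2)} .
\]
The constant is independent of $\eps$, $j$ and $T$, because $C_1$ from \Cref{prop:H-pointwise} is independent of $T$ and $\eps$, and the shell constants depend only on $\omega$ and $\ell$.

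There is no real obstacle here. The only point requiring care is Step 1: making sure the comparison $(1+t)^\vartheta\gtrsim R_j^\vartheta$ is uniform in $j\ge1$. This uses $R_j\ge R_1>0$ to replace $1+R_j$ by $R_j$. The measurability and a.e.-validity of the bound for $H$ are inherited directly from \Cref{rem:weak-class} and \Cref{prop:H-pointwise}.
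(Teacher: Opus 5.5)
Your proposal is correct and follows the paper's argument. Like the paper, you integrate the pointwise bound of \Cref{prop:H-pointwise} over $I_j$ and then use the comparabilities $1+t\asymp R_j$, $1+A(t)\asymp 1+A(R_j)$ and $|I_j|\asymp R_j$ from \Cref{lem:shell-geometry}; your remark that two-sided comparability makes the signs of the exponents irrelevant is the point the paper uses implicitly.
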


\begin{proof}
Let $j\in\N$ be such that $I_j\subset [t_1,T)$. By \Cref{prop:H-pointwise}, for a.e. $t\in [t_1,T)$, we have
\[
H(t)
\ge
C\varepsilon^p
(1+t)^{-\frac{\ell p}{2}}
(1+A(t))^{-\frac{n-1}{2}(p-2)}.
\] In particular,
since $I_j\subset[t_1,T)$, this inequality holds for a.e. $t\in I_j$.
Multiplying by $(1+t)^{-\alpha}$ and integrating over $I_j$, we obtain
\[
\int_{I_j}(1+t)^{-\alpha}H(t)\dd t
\ge
C\varepsilon^p
\int_{I_j}
(1+t)^{-\alpha-\frac{\ell p}{2}}
(1+A(t))^{-\frac{n-1}{2}(p-2)}
\dd t.
\]
Recall
from \Cref{lem:shell-geometry} that, for every $j\in\N$ and every $t \in I_j$, 
\[
1+t\asymp R_j,
\quad
1+A(t)\asymp 1+A(R_j),
\quad \text{and} \quad
|I_j|\asymp R_j,
\] with
the implicit constants independent of $j$. Hence, for every
$t\in I_j$,
\[
(1+t)^{-\alpha-\frac{\ell p}{2}}
\ge
C R_j^{-\alpha-\frac{\ell p}{2}},
\]
and
\[
(1+A(t))^{-\frac{n-1}{2}(p-2)}
\ge
C
(1+A(R_j))^{-\frac{n-1}{2}(p-2)}.
\]
Therefore,
\[
\begin{split}
\int_{I_j}(1+t)^{-\alpha}H(t)\,\dd t
&\ge
C\varepsilon^p
R_j^{-\alpha-\frac{\ell p}{2}}
(1+A(R_j))^{-\frac{n-1}{2}(p-2)}
|I_j| \\
&\ge
C\varepsilon^p
R_j^{1-\alpha-\frac{\ell p}{2}}
(1+A(R_j))^{-\frac{n-1}{2}(p-2)}.\qedhere
\end{split} 
\]
\end{proof}

The preceding proposition ensures a lower bound for nonlinear term on each logarithmic shell. In order to obtain the opposite estimate, we first need uniform control of the derivatives of the logarithmic cutoff $\eta_j$.

\begin{lemma}\label{lem:eta-weighted}
Let $\eta_j$ the functions defined in \eqref{eq:eta_j}. The quantities
\[
|\eta_j'|^{p'}\eta_j^{-1/(p-1)}
\quad \text{and} \quad 
|\eta_j''|^{p'}\eta_j^{-1/(p-1)},
\]
initially defined on $S_j=\{\eta_j>0\}$, extend continuously by zero to all of $[0,\infty)$.
Moreover, there exists $C>0$, independent of $j$, such that
\begin{equation}\label{eq:eta1}
|\eta_j'(t)|^{p'}\eta_j(t)^{-1/(p-1)}
\le
C(1+t)^{-p'}\one_{S_j}(t),
\end{equation}
and
\begin{equation}\label{eq:eta2}
|\eta_j''(t)|^{p'}\eta_j(t)^{-1/(p-1)}
\le
C(1+t)^{-2p'}\one_{S_j}(t).
\end{equation}
\end{lemma}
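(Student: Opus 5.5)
We compute the derivatives of $\eta_j=\chi(\sigma_j)^k$ by the chain rule and reduce everything to powers of $\chi(\sigma_j)$. Since $\sigma_j'(t)=\omega/(1+t)$ and $\sigma_j''(t)=-\omega/(1+t)^2$, we obtain
\[
\eta_j'(t)=k\,\chi(\sigma_j)^{k-1}\chi'(\sigma_j)\,\frac{\omega}{1+t},
\]
\[
\eta_j''(t)=\Bigl[k(k-1)\chi(\sigma_j)^{k-2}\chi'(\sigma_j)^2+k\chi(\sigma_j)^{k-1}\chi''(\sigma_j)\Bigr]\frac{\omega^2}{(1+t)^2}
-k\chi(\sigma_j)^{k-1}\chi'(\sigma_j)\frac{\omega}{(1+t)^2}.
\]
Because $\chi,\chi',\chi''$ are bounded on $\R$ and $0\le\chi\le1$, we get, on $S_j$,
\[
|\eta_j'|\le C\chi(\sigma_j)^{k-1}(1+t)^{-1}
\quad\text{and}\quad
|\eta_j''|\le C\chi(\sigma_j)^{k-2}(1+t)^{-2},
\]
with $C$ depending only on $k,\omega,\chi$, and hence not on $j$. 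This uniformity is the whole point: the dependence on $j$ enters only through $\sigma_j$, which is absorbed by the sup norms of $\chi$ and its derivatives.

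Next we multiply by $\eta_j^{-1/(p-1)}=\chi(\sigma_j)^{-k/(p-1)}$ on $S_j$. Using $p'=p/(p-1)$, the exponent of $\chi(\sigma_j)$ in the first quantity is
\[
(k-1)p'-\frac{k}{p-1}=\frac{(k-1)p-k}{p-1}=k-p'.
\]
In the second quantity it is
\[
(k-2)p'-\frac{k}{p-1}=k-2p'.
\]
Both exponents are nonnegative because $k\ge 2p'$. Since $\chi\le 1$, we obtain on $S_j$
\[
|\eta_j'|^{p'}\eta_j^{-1/(p-1)}\le C(1+t)^{-p'}\chi(\sigma_j)^{k-p'}\le C(1+t)^{-p'},
\]
\[
|\eta_j''|^{p'}\eta_j^{-1/(p-1)}\le C(1+t)^{-2p'}\chi(\sigma_j)^{k-2p'}\le C(1+t)^{-2p'}.
\]
These are \eqref{eq:eta1} and \eqref{eq:eta2} on $S_j$.

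The main obstacle is the continuous extension by zero, which is most delicate in the borderline case $k=2p'$. There the bound $\chi(\sigma_j)^{k-2p'}$ is identically $1$ and does not vanish at $\partial S_j$. To handle this, we rewrite the quantities exactly rather than bound them. The first one equals
\[
\bigl(k\omega(1+t)^{-1}\bigr)^{p'}\,|\chi'(\sigma_j)|^{p'}\chi(\sigma_j)^{k-p'},
\]
which is continuous on all of $[0,\infty)$ and vanishes off $S_j$, since $k-p'\ge p'>0$. The second one is the $p'$-th power of a bracket multiplied by $\chi^{-k/(p-1)}$. Distributing, the terms coming from $\chi''$ and from the $\sigma_j''$ piece carry $\chi^{k-1}$, so they contribute a factor $\chi^{(k-1)-k/p'}$ with exponent $k/p-1\cdot$ hmm, more precisely we factor out $\chi(\sigma_j)^{(k-2)}$ and write
\[
|\eta_j''|^{p'}\eta_j^{-1/(p-1)}=\frac{C_\omega}{(1+t)^{2p'}}\,\chi(\sigma_j)^{k-2p'}\,\Bigl|k(k-1)\omega\chi'(\sigma_j)^2+k\chi(\sigma_j)\bigl(\omega\chi''(\sigma_j)-\chi'(\sigma_j)\bigr)\Bigr|^{p'}.
\]
If $k>2p'$, the factor $\chi^{k-2p'}$ is continuous and vanishes outside $S_j$. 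If $k=2p'$, the bracket itself is a continuous function of $\sigma_j$ that vanishes for $|\sigma_j|\ge\pi/4$, because $\chi$, $\chi'$ and $\chi''$ all vanish there (as $\chi$ is smooth with support in $[-\pi/4,\pi/4]$). In either case the product is continuous on $[0,\infty)$ and equals zero off $S_j$. If one prefers to avoid this case distinction, one can simply note that the argument is cleanest with $k>2p'$, and any fixed such $k$ works for the rest of the paper.
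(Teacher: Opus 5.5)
Your proof is correct and follows the paper's argument: the same chain-rule computation, the same exponent identities $(k-1)p'-k/(p-1)=k-p'$ and $(k-2)p'-k/(p-1)=k-2p'$, and uniformity in $j$ coming only from the sup norms of $\chi,\chi',\chi''$. Your handling of the continuous extension in the borderline case $k=2p'$ is more careful than the paper's one-line appeal to the vanishing of the derivatives of $\chi$ at $\pm\pi/4$: you use the exact factorization of $\eta_j''$, in which the bracket vanishes at $\pm\pi/4$. You should delete the stray ``hmm'' fragment left in the middle of the write-up.
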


\begin{proof}
We have
\[
\eta_j'(t)
=
k \, \chi(\sigma_j(t))^{k-1}\chi'(\sigma_j(t)) \, \sigma_j'(t).
\]
Hence,
\[
|\eta_j'(t)|^{p'}\eta_j(t)^{-1/(p-1)}
=
k^{p'}|\chi'(\sigma_j(t))|^{p'}|\sigma_j'(t)|^{p'}
\chi(\sigma_j(t))^{(k-1)p'-k/(p-1)}.
\]
Now, we note
\[
(k-1)p'-\frac{k}{p-1}
=
(k-1)p'-\frac{kp'}{p}
=
k-p'.
\]
Because $k\ge2p'$, this exponent is nonnegative. Since $\sigma_j'(t)=\frac{\omega}{1+t}$
and $\chi$ and $\chi'$ are both bounded, 
we obtain \eqref{eq:eta1} on $S_j$.

Similarly, differentiating once more,
\[
\begin{split}
    \eta_j''(t)
= \ & 
k(k-1)\chi(\sigma_j(t))^{k-2}\chi'(\sigma_j(t))^2\sigma_j'(t)^2
+
k\chi(\sigma_j(t))^{k-1}\chi''(\sigma_j(t))\sigma_j'(t)^2 \\
& +
k\chi(\sigma_j(t))^{k-1}\chi'(\sigma_j(t))\sigma_j''(t).
\end{split}
\]
Thus,
\[
|\eta_j''(t)|
\le
C(1+t)^{-2}\chi(\sigma_j(t))^{k-2}\one_{S_j}(t),
\]
where we use that $\chi,\chi',\chi''$ are bounded and $\sigma_j''(t)=-\frac{\omega}{(1+t)^2}$.
Hence,
\[
|\eta_j''(t)|^{p'}\eta_j(t)^{-1/(p-1)}
\le
C(1+t)^{-2p'}
\chi(\sigma_j(t))^{(k-2)p'-k/(p-1)}.
\]
Now, the exponent
\[
(k-2)p'-\frac{k}{p-1}
=
k-2p'
\] is nonnegative since $k\ge2p'$. Then, we obtain \eqref{eq:eta2} on $S_j$.

Finally, since $\chi\in C_c^{\infty}(\R)$, all derivatives of $\chi$ vanish at the boundary points $\pm\pi/4$ of its support. Therefore, the two expressions above tend to $0$ as $t\to\partial S_j$, and then extend continuously by zero to all of $[0,\infty)$.
\end{proof}

Next, we record two simple consequences of our choice of the logarithmic
shells. Recall that $h$ defined in \eqref{eq:h-sol} solves
\eqref{eq:h-ode-ce} and satisfies \eqref{eq:max-oscil}. Since, for
$t\in S_j$, we have $|\sigma_j(t)|<\pi/4$, we are precisely on a region
where the oscillatory factor has fixed positive sign. More explicitly,
\[
\frac{\sqrt{2}}{2}\le
\cos(\sigma_j(t)) = \cos\bigl(\omega\log(1+t)\bigr) \le 1.
\]
Combining this with the shell localization $1+t\asymp R_j$, obtained in
\Cref{lem:shell-geometry}, we obtain
\begin{equation}\label{eq:h-comparable}
h(t)\asymp R_j^{1/2},
\quad\text{for all }t\in S_j,
\end{equation}
with constants independent of $j$. Moreover, differentiating $h$ gives
\begin{equation}\label{eq:hprime}
|h'(t)|\le C(1+t)^{-1/2},
\quad\text{for all }t\ge0.
\end{equation}

These estimates show that, on each shell, the oscillatory
solution $h$ behaves like a positive constant multiple of $R_j^{1/2}$,
while the derivatives of the cutoff produce only explicit powers of
$(1+t)^{-1}$. This is the mechanism that allows the localized test
function argument to yield the following upper bound.

\begin{proposition}[Upper bound on shells]\label{prop:shell-upper-weighted}
Let $u$ be a finite-propagation weak solution of
\eqref{eq:main2} on $[0,T)$, as in \Cref{def:weak}. Then, there exists a constant
$C>0$, independent of $\varepsilon$, $j$, and $T$, such that, for
every $j\in\N$ satisfying $\overline{S_j}\subset(0,T)$, one has
\begin{equation}\label{eq:shell-upper-weighted}
 \int_{I_j}(1+t)^{-\alpha}H(t)\,\dd t
 \le
 C R_j^{1-2p'+\alpha/(p-1)}
 (1+A(R_j))^n .
\end{equation}
\end{proposition}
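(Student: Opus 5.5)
We test the weak formulation \eqref{eq:weak-formulation} with $\phi(x,t)=h(t)\eta_j(t)\zeta_{R}(x)$, where $h$ is the oscillatory Cauchy--Euler solution \eqref{eq:h-sol}, $\eta_j$ is the logarithmic cutoff \eqref{eq:eta_j}, and $\zeta_R$ is the spatial cutoff \ref{def:smooth_cutoff} with $R$ chosen so that $R>2(R_0+A(C_SR_j))$. Since $\overline{S_j}\subset(0,T)$, $\phi$ is compactly supported in $\R^n\times(0,T)$ and the data term vanishes. By finite propagation \eqref{eq:finite-speed-weak}, $\zeta_R\equiv1$ on the support of $u(\cdot,t)$ for $t\in S_j$, so the Laplacian term drops out. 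Because $h''+\mu^2(1+t)^{-2}h=0$, the potential cancels exactly and the left-hand side reduces to
\[
\int_{S_j}\!\int_{\R^n}u\,\bigl(h\eta_j''+2h'\eta_j'\bigr)\dd x\dd t .
\]
This yields the identity
\[
\int_0^T (1+t)^{-\alpha}h\eta_j\,H(t)\dd t=\int_{S_j}\!\int_{\R^n}u\,\bigl(h\eta_j''+2h'\eta_j'\bigr)\dd x\dd t .
\]
The left side is nonnegative because $h>0$ on $S_j$ by \eqref{eq:h-comparable}. Using $h\asymp R_j^{1/2}$ and $\eta_j=1$ on $I_j$, it dominates $cR_j^{1/2}\int_{I_j}(1+t)^{-\alpha}H\dd t$. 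Set $Y:=\int (1+t)^{-\alpha}H\eta_j\dd t$.

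Next we bound the right side by Hölder's inequality with weight $(1+t)^{-\alpha}\eta_j$:
\[
\Bigl|\int\!\!\int u\,\Psi\Bigr|\le Y^{1/p}\Bigl(\int_{S_j}\!\int_{B_{R_0+A(t)}}(1+t)^{\alpha p'/p}|\Psi|^{p'}\eta_j^{-p'/p}\dd x\dd t\Bigr)^{1/p'},
\]
where $\Psi=h\eta_j''+2h'\eta_j'$. Since $p'/p=1/(p-1)$, \Cref{lem:eta-weighted} applies directly. Combining \eqref{eq:eta2} with $h\lesssim R_j^{1/2}$, and \eqref{eq:eta1} with \eqref{eq:hprime}, gives
\[
|\Psi|^{p'}\eta_j^{-1/(p-1)}\lesssim R_j^{p'/2}R_j^{-2p'}\one_{S_j}.
\]
The two terms produce the same power. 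Using \Cref{lem:shell-geometry} ($|S_j|\asymp R_j$, $1+t\asymp R_j$, $|B_{R_0+A(t)}|\lesssim (1+A(R_j))^n$), the second factor is at most
\[
\Bigl(R_j^{\alpha/(p-1)+p'/2-2p'+1}(1+A(R_j))^n\Bigr)^{1/p'} .
\]

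Finally we absorb. The left side is comparable to $R_j^{1/2}\int(1+t)^{-\alpha}H\eta_j\dd t$, since $h\asymp R_j^{1/2}$ on all of $\supp\eta_j=\overline{S_j}$. We therefore get
\[
R_j^{1/2}Y\lesssim Y^{1/p}\bigl(\cdots\bigr)^{1/p'},
\]
hence
\[
Y\lesssim R_j^{-1/2}\,R_j^{\alpha/(p-1)(p-1)/p\cdot\ldots}\cdots .
\]
More cleanly, $Y^{1/p'}\lesssim R_j^{-1/2}(\cdots)^{1/p'}$, so
\[
Y\lesssim R_j^{-p'/2}\,R_j^{\alpha/(p-1)+p'/2-2p'+1}(1+A(R_j))^n=R_j^{1-2p'+\alpha/(p-1)}(1+A(R_j))^n .
\]
Since $\eta_j\equiv1$ on $I_j$, this is exactly \eqref{eq:shell-upper-weighted}.

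The main obstacle is the bookkeeping needed to justify the identity. First, the weak formulation requires $\phi\in C_c^\infty$; this holds because $\eta_j$ is smooth and supported in $\overline{S_j}\subset(0,T)$. Second, we must check that $Y<\infty$ so the absorption is legitimate. This follows from $u\in L^p_{\loc}$ and the compact support in $x$. Third, the singular weight $\eta_j^{-1/(p-1)}$ is handled by the continuity-by-zero statement in \Cref{lem:eta-weighted}.
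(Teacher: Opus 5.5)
Your proposal is correct and follows the paper's proof: the same test function $\zeta_R(x)h(t)\eta_j(t)$, the same exact cancellation via $h''+\mu^2(1+t)^{-2}h=0$, Hölder combined with the weighted cutoff bounds of \Cref{lem:eta-weighted}, and absorption of the nonlinear term. The only, cosmetic, difference is the Hölder weight. You use $(1+t)^{-\alpha}\eta_j$ and invoke $h\gtrsim R_j^{1/2}$ on $S_j$ before absorbing. The paper instead uses the full weight $(1+t)^{-\alpha}h\eta_j$, absorbs $L_j$, and pays the factor $h^{-1/(p-1)}\asymp R_j^{-1/(2(p-1))}$ inside $M_j$. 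Both routes give the same exponent. You should also state the trivial case $Y=0$ before dividing by $Y^{1/p}$.
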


\begin{proof}
Fix $j\in\N$ such that $\overline{S_j}\subset(0,T)$ and define
\[
\Psi_j(t):=h(t)\,\eta_j(t).
\]
Then, since $h>0$ on $S_j$, we have
\[
\Psi_j\in C_c^\infty((0,T)),
\quad
\Psi_j\ge0,
\quad
\Psi_j>0 \ \text{on } S_j,
\quad \text{and }
\supp\Psi_j\subset\overline{S_j}.
\]
Moreover, since $h$ solves \eqref{eq:h-ode-ce},
$\Psi_j$ solves
\begin{equation}\label{eq:Psi-error-weighted}
\Psi_j''(t)+\frac{\mu^2}{(1+t)^2}\Psi_j(t)
=
2h'(t)\eta_j'(t)+h(t)\eta_j''(t).
\end{equation}
First, we claim the following identity:
\begin{equation}\label{eq:test-id-weighted}
\int_0^{+\infty}
(1+t)^{-\alpha}\Psi_j(t)H(t)\,\dd t
=
\int_0^{+\infty}
\left(
\Psi_j''(t)+\frac{\mu^2}{(1+t)^2}\Psi_j(t)
\right)
\left(\int_{\R^n}u(x,t)\,\dd x\right)\dd t .
\end{equation}
Here, the left-hand side is the nonlinear term in the weak formulation of
\eqref{eq:main2}.

To prove \eqref{eq:test-id-weighted}, let $\zeta$ be a cutoff function as in \ref{def:smooth_cutoff} and let $c_S,C_S>0$ as in \eqref{eq:shell-placement}. 
Since $\supp\Psi_j\subset\overline{S_j}$, the finite-propagation property
\eqref{eq:finite-speed-weak} implies
\[
\supp u(t,\cdot)
\subset B_{R_0+A(t)}
\subset B_{R_0+A(C_SR_j)},
\quad\text{for every }t\in\supp\Psi_j.
\]
Choose $R_j^\ast>2\bigl(R_0+A(C_SR_j)\bigr)$. Then, for every $t\in\supp\Psi_j$, we have $\supp u(t,\cdot)\subset B_{R_j^\ast/2}$.
Hence, on $\supp u(t,\cdot)$, we have
\begin{equation}\label{eq:zeta_Rjast}
\zeta_{R_j^\ast}\equiv1,
\quad
\nabla\zeta_{R_j^\ast}\equiv0,
\quad \text{and } \
\Delta\zeta_{R_j^\ast}\equiv0.    
\end{equation}
Now take
\[
\phi_j(x,t):=\zeta_{R_j^\ast}(x)\Psi_j(t)
\]
as a test function in the weak formulation of \eqref{eq:main2}.
Since $\Psi_j\in C_c^\infty((0,T))$, the terms with $\phi_j(x,0)$ and $\phi_{j,t}(x,0)$ vanish. Moreover,
\[
\phi_{j,tt}(x,t)=\zeta_{R_j^\ast}(x)\Psi_j''(t),
\qquad
\Delta\phi_j(x,t)=\Psi_j(t)\Delta\zeta_{R_j^\ast}(x).
\]
Thus, the weak formulation gives
\begin{align*}
\int_0^{+\infty}\!\!\!\int_{\R^n}
(1+t)^{-\alpha}|u(x,t)|^p
\zeta_{R_j^\ast}(x)\Psi_j(t) \dd x \dd t
&=
\int_0^{+\infty}\!\!\!\int_{\R^n}
u(x,t)\zeta_{R_j^\ast}(x)
\!\left(
\Psi_j''(t)+\frac{\mu^2}{(1+t)^2}\Psi_j(t)
\right) \!\dd x \dd t
\\
&\quad
-
\int_0^{+\infty}\!\!\!\int_{\R^n}
u(x,t)(1+t)^{2\ell}\Psi_j(t)
\Delta\zeta_{R_j^\ast}(x) \dd x \dd t.
\end{align*}
By our choice of $R_j^\ast$ and \eqref{eq:zeta_Rjast}, we obtain \eqref{eq:test-id-weighted}, as claimed.

\smallskip

Next, in order to simplify some calculations, let us write
\[
L_j:=
\int_0^{+\infty}
(1+t)^{-\alpha}\Psi_j(t)H(t)\dd t.
\]
By \eqref{eq:Psi-error-weighted} and \eqref{eq:test-id-weighted}, we have
\begin{equation}\label{eq:test-id-weighted-2}
L_j
\le
\int_{S_j}
\left|2h'(t)\eta_j'(t)+h(t)\eta_j''(t)\right| \,
\int_{\R^n}\left|u(x,t)\right|\dd x\dd t.
\end{equation}
On the other hand, since $\eta_j\equiv1$ on $I_j$ and $h(t)\ge cR_j^{1/2}$, for all $t\in I_j$, we obtain
\begin{equation}\label{eq:L-lower-weighted}
L_j
\ge
\int_{I_j}(1+t)^{-\alpha}h(t)\eta_j(t)H(t)\,\dd t
\ge
cR_j^{1/2}
\int_{I_j}(1+t)^{-\alpha}H(t)\,\dd t.
\end{equation}
Now, by Hölder's inequality in the space variable and by the
finite-propagation property, there exists $C>0$ such that
\[
\int_{\R^n}\left|u(x,t)\right|\,\dd x
\le
|\supp u(t,\cdot)|^{1/p'}H(t)^{1/p} \le (R_0+A(t))^{n/p'}H(t)^{1/p} \le C(1+A(t))^{n/p'}H(t)^{1/p}.
\]
Substituting this into
\eqref{eq:test-id-weighted-2}, we find
\[
L_j
\le
C\int_{S_j}
\left|2h'(t)\eta_j'(t)+h(t)\eta_j''(t)\right|
(1+A(t))^{n/p'}H(t)^{1/p}\dd t.
\]
Since $\Psi_j>0$ on $S_j$, we can conveniently write
\[
H(t)^{1/p}
=
(1+t)^{\alpha/p}\Psi_j(t)^{-1/p}
\left[(1+t)^{-\alpha}\Psi_j(t)H(t)\right]^{1/p}
\] so that
\[
L_j
\le
C\!\int_{S_j}
\left|2h'(t)\eta_j'(t)+h(t)\eta_j''(t)\right|
(1+A(t))^{n/p'}
(1+t)^{\alpha/p}
\Psi_j(t)^{-1/p}
\left[(1+t)^{-\alpha}\Psi_j(t)H(t)\right]^{1/p}\!\dd t.
\]
Applying Hölder's inequality in $t$ with exponents $p$ and $p'$, we
obtain
\begin{equation}\label{eq:L_j-intermediate}
    L_j\le C M_j^{1/p'}L_j^{1/p},
\end{equation}
where
\begin{equation}\label{eq:M-weighted-def}
M_j
=
\int_{S_j}
\left|2h'(t)\eta_j'(t)+h(t)\eta_j''(t)\right|^{p'}
(1+A(t))^n
(1+t)^{\alpha/(p-1)}
\Psi_j(t)^{-1/(p-1)}
\dd t.
\end{equation}

If $L_j=0$, then $H=0$ for a.e. $t\in I_j$, and
\eqref{eq:shell-upper-weighted} is immediate. Then, we assume
$L_j>0$ and it follows from \eqref{eq:L_j-intermediate} that
\[
L_j\le C M_j.
\]
Combining this with \eqref{eq:L-lower-weighted}, we obtain
\begin{equation}\label{eq:Y-via-M-weighted}
\int_{I_j}(1+t)^{-\alpha}H(t)\,\dd t
\le
CR_j^{-1/2}M_j.
\end{equation}

We now estimate $M_j$. From
\eqref{eq:M-weighted-def} and the elementary inequality $(a+b)^{p'}\le C(a^{p'}+b^{p'})$, we have
\[
M_j\le C\bigl(M_j^{(1)}+M_j^{(2)}\bigr),
\]
where
\begin{align*}
M_j^{(1)}
:=
\int_{S_j}
&(1+t)^{\alpha/(p-1)}
|h'(t)|^{p'}|\eta_j'(t)|^{p'}
\Psi_j(t)^{-1/(p-1)}
(1+A(t))^n\,\dd t,
\end{align*}
and
\begin{align*}
M_j^{(2)}
:=
\int_{S_j}
&(1+t)^{\alpha/(p-1)}
|h(t)|^{p'}|\eta_j''(t)|^{p'}
\Psi_j(t)^{-1/(p-1)}
(1+A(t))^n\,\dd t.
\end{align*}
For the first term, we use the definition $\Psi_j=h\eta_j$, the comparability \eqref{eq:h-comparable},
the estimate \eqref{eq:hprime},
and the weighted cutoff estimate \eqref{eq:eta1}, to obtain
\begin{align*}
M_j^{(1)}
&\le
C R_j^{-1/(2(p-1))}
\int_{S_j}
(1+t)^{\alpha/(p-1)}
(1+t)^{-p'/2}
(1+t)^{-p'}
(1+A(t))^n\dd t
\\
&=
C R_j^{-1/(2(p-1))}
\int_{S_j}
(1+t)^{\alpha/(p-1)-3p'/2}
(1+A(t))^n\dd t.
\end{align*}
On $S_j$, the shell geometry from \Cref{lem:shell-geometry} gives
$1+t\asymp R_j$, $|S_j|\asymp R_j$, and
$1+A(t)\asymp 1+A(R_j)$,
with constants independent of $j$. Hence,
\begin{equation}\label{eq:M1-weighted}
M_j^{(1)}
\le
C R_j^{-1/(2(p-1))}
R_j^{1+\alpha/(p-1)-3p'/2}
(1+A(R_j))^n.
\end{equation}

For the second term, we use instead the weighted cutoff estimate \eqref{eq:eta2}
to infer
\begin{align*}
M_j^{(2)}
&\le
C R_j^{p'/2-1/(2(p-1))}
\int_{S_j}
(1+t)^{\alpha/(p-1)}
(1+t)^{-2p'}
(1+A(t))^n\dd t.
\end{align*}
Using again the shell geometry,
\begin{equation}\label{eq:M2-weighted-pre}
M_j^{(2)}
\le
C R_j^{p'/2-1/(2(p-1))}
R_j^{1+\alpha/(p-1)-2p'}
(1+A(R_j))^n.
\end{equation}
Since
\[
\frac{p'}2-\frac{1}{2(p-1)}=\frac12,
\]
we obtain
\begin{equation}\label{eq:M2-weighted}
M_j^{(2)}
\le
C R_j^{1/2}
R_j^{1+\alpha/(p-1)-2p'}
(1+A(R_j))^n.
\end{equation}
Observe  that the powers in \eqref{eq:M1-weighted} and \eqref{eq:M2-weighted} are the
same and so
\begin{equation}\label{eq:M-weighted-final}
M_j
\le
C
R_j^{-1/(2(p-1))}
R_j^{1+\alpha/(p-1)-3p'/2}
(1+A(R_j))^n.
\end{equation}

Substituting \eqref{eq:M-weighted-final} into
\eqref{eq:Y-via-M-weighted}, we arrive at
\[
\int_{I_j}(1+t)^{-\alpha}H(t)\,\dd t
\le
C
R_j^{-1/2}
R_j^{-1/(2(p-1))}
R_j^{1+\alpha/(p-1)-3p'/2}
(1+A(R_j))^n.
\] Since the combined exponent of $R_j$ is equal to
\[
-\frac12-\frac{1}{2(p-1)} + 1+\frac{\alpha}{p-1}
-\frac{3p'}2
=
1-2p'+\frac{\alpha}{p-1},
\] we conclude \eqref{eq:shell-upper-weighted}.
\end{proof}

\section{Blow up and Lifespan}\label{sec:proof-main}

We are now ready to combine the upper and lower bounds on shells and to prove \Cref{thm:lifespan-weighted}. 

\begin{proof}[Proof of \Cref{thm:lifespan-weighted}]
Set
\begin{equation}\label{eq:Theta-gamma-alpha}
\Theta_\alpha
:=
1-2p'+\frac{\alpha}{p-1}
-
\left(1-\alpha-\frac{\ell p}{2}\right)
+
(1+\ell)
\left(
n+\frac{n-1}{2}(p-2)
\right)
=
\frac{(1+\ell)\gamma_\alpha(n,\ell;p)}{2(p-1)}.
\end{equation}

\smallskip

\noindent
Since $\gamma_\alpha(n,\ell;p)<0$, we have $\Theta_\alpha<0$.
Let $t_1\ge1$ be given by \Cref{prop:H-pointwise} and $0<c_S<c_I<C_I<C_S$ given by \Cref{lem:shell-geometry}. 
Choose $j_\ast\ge1$ such that
\begin{equation}\label{eq:jstar-lower}
c_I R_{j_\ast}\ge t_1.
\end{equation}
This is possible since $R_j\to\infty$. We split the argument into two
cases.

\smallskip

Assume first that $T_\eps\le C_S R_{j_\ast}$. Then the desired lifespan estimate is immediate: if
\[
\sigma_\alpha:=\frac{p}{\Theta_\alpha}=
\frac{2p(p-1)}
{(1+\ell)\gamma_\alpha(n,\ell;p)}<0;
\]
then, for $0<\eps\le1$,
\[
T_\eps
\le
C_S R_{j_\ast}
\le
C_S R_{j_\ast}\eps^{\sigma_\alpha} \le C\eps^{\sigma_\alpha}.
\]

\smallskip

Let us then assume, on the other hand, that $T_\eps>C_S R_{j_\ast}$. 
In this case, at least one $j$, namely $j=j_*$, satisfies $\overline{S_{j}}\subset(0,T_\eps)$. Moreover, by \eqref{eq:jstar-lower} and $I_j\subset S_j$, we have
$I_{j}\subset[c_I R_{j},C_I R_{j}]
\subset[t_1,T_\eps)$. 
Hence, both \Cref{prop:shell-lower-weighted} and
\Cref{prop:shell-upper-weighted} apply and we have
\[
c\eps^p
R_j^{1-\alpha-\ell p/2}
(1+A(R_j))^{-\frac{n-1}{2}(p-2)}
\le
\int_{I_j}(1+t)^{-\alpha}H(t)\,\dd t
\le
C
R_j^{1-2p'+\alpha/(p-1)}
(1+A(R_j))^n .
\]
Thus,
\[
\eps^p
\le
C
R_j^{
1-2p'+\alpha/(p-1)
-\left(1-\alpha-\frac{\ell p}{2}\right)
}
(1+A(R_j))^{n+\frac{n-1}{2}(p-2)}.
\]
Since $1+A(R_j)\asymp R_j^{1+\ell}$,
we obtain, for all $j\in\N$ such that $\overline{S_{j}}\subset(0,T_\eps)$,
\begin{equation}\label{eq:eps-Theta}
\eps^p\le C R_j^{\Theta_\alpha}.
\end{equation}

Now, we argue that, since $\Theta_\alpha<0$, \eqref{eq:eps-Theta} implies that $T_\eps< +\infty$. Indeed, if $T_\eps=+\infty$, then
$\overline{S_j}\subset(0,T_\eps)$, for every $j\ge j_\ast$, and hence, for every $j\ge j_\ast$,
\[
\eps^p\le C R_j^{\Theta_\alpha}.
\]
Letting $j\to\infty$, the right-hand side tends to $0$ and we have a contradiction.
Therefore, $T_\eps<+\infty$ and no global weak solution can exist.

For the lifespan, choose $j_\eps\ge j_\ast$ such that
\begin{equation}\label{eq:choice-jeps-simple}
C_S R_{j_\eps}<T_\eps\le C_S R_{j_\eps+1}.
\end{equation}
Then $\overline{S_{j_\eps}}\subset(0,T_\eps)$ and \eqref{eq:eps-Theta} and $\Theta_\alpha<0$ imply that
\[
R_{j_\eps}\le C\eps^{p/\Theta_\alpha}
=
C\eps^{\sigma_\alpha}.
\]
Finally, since there exists a constant $C_\omega$ such that $R_{j+1}\le C_\omega R_j$, for all $j\in\N$ (for example, $C_\omega=e^{2\pi/\omega}+1$), we use \eqref{eq:choice-jeps-simple} to obtain
\[
T_\eps
\le
C_S R_{j_\eps+1}
\le
C R_{j_\eps}
\le
C\eps^{\sigma_\alpha}.
\]
Therefore,
\[
T_\eps
\le
C\eps^{
\frac{2p(p-1)}
{(1+\ell)\gamma_\alpha(n,\ell;p)}
},
\] which is \eqref{eq:lifespan-weighted}.
\end{proof}

\vspace{2em}

\textbf{Data availability:} Data sharing not applicable to this article as no datasets were generated or analysed during
the current study.

\vspace{1em}

\textbf{Conflict of interest:} We do not have any conflict of interest to declare

\newpage

\bibliographystyle{abbrv}
\bibliography{ref.bib}

\newpage

\appendix 
\section{Proof of \Cref{prop:adjoint-profile}} \label{appendixA}

	For fixed $\lambda>|\mu|$, introduce
	\[
	b_0(t):=\lambda(1+t)^\ell+\frac{\ell}{2(1+t)}
	\quad \text{and} \quad
	q:=\mu^2+\frac{\ell(\ell+2)}{4}.
	\]
	Since $-1<\ell<0$ and $\mu^2>\frac14$, we have $b_0 > 0$ and 
	$
	q>0$.
	The choice of $b_0$ is suggested by the formal profile
	$
	(1+t)^{-\ell/2}e^{-\lambda A(t)}
	$,
	since
	\[
	\frac{\dd}{\dd t}\Bigl((1+t)^{-\ell/2}e^{-\lambda A(t)}\Bigr)
	=
	-
	b_0 (t) \, 
	(1+t)^{-\ell/2}e^{-\lambda A(t)},
	\] and we look for a positive solution $m$ of \eqref{eq:m-eq} in the form
	\begin{equation}\label{eq:m-lam-ode}
	    -\frac{m'(t)}{m(t)}=b_0(t)+\eta(t),
	\end{equation}
	where $\eta$ is a suitable correction term.
	
	We will split the proof into 5 steps.

	\subsection{The Riccati equation} 
    
    Set $r=b_0+\eta$ and observe that $m'= - mr$ implies $m''=(r^2-r')m$. Thus \eqref{eq:m-eq} is equivalent to
	\[
	r(t)^2-r'(t)
	=
	\lambda^2(1+t)^{2\ell}-\frac{\mu^2}{(1+t)^2}.
	\]
	A direct computation gives
	\[
	b_0(t)^2-b_0'(t)
	=
	\lambda^2(1+t)^{2\ell}+\frac{\ell(\ell+2)}{4(1+t)^2}
	\]
	and hence, by our choice of $q > 0$, the correction term $\eta$ must solve
	\begin{equation}\label{eq:eta-riccati-proof}
		\eta'(t)-2b_0(t)\eta(t)=\eta(t)^2+\frac{q}{(1+t)^2}.
	\end{equation}
	We impose the condition that $\eta(t)\to0$, as $t\to+ \infty$, which, by an integrating factor argument, leads to the integral equation
	\begin{equation}\label{eq:eta-integral-proof}
		\eta(t)
		=
		-\int_t^{+\infty}
		K(t,s)\biggl(\eta(s)^2+\frac{q}{(1+s)^2}\biggr)\dd s,
	\end{equation}
	where, for $s\ge t$,
	\begin{equation}\label{eq:kernel-proof}
		K(t,s):=
		\exp\,\biggl(-2\int_t^s b_0(\sigma)\dd\sigma\biggr).
	\end{equation}
	Since
	\[
	2\int_t^s b_0(\sigma)\dd\sigma
	=
	2\lambda \big[A(s)-A(t)\big]+\ell\log\frac{1+s}{1+t},
	\]
	we can rewrite the kernel $K$ as
	\begin{equation}\label{eq:kernel-explicit-proof}
		K(t,s)
		=
		e^{-2\lambda\big[A(s)-A(t)\big]}
		\biggl(\frac{1+t}{1+s}\biggr)^\ell.
	\end{equation}

	\subsection{Kernel estimates}
    
    Next,
	we claim that, for all $t\ge0$,
	\begin{equation}\label{eq:kernel-est-1-proof}
		\int_t^{+\infty} K(t,s)\frac{1}{(1+s)^2}\dd s
		\le
		\frac{1}{2 \lambda} \, (1+t)^{-\ell-2}
	\end{equation}
	and
	\begin{equation}\label{eq:kernel-est-2-proof}
		\int_t^{+\infty} K(t,s)\frac{1}{(1+s)^{2\ell+4}}\dd s
		\le
		\frac{1}{2 \lambda} \, (1+t)^{-3\ell-4}.
	\end{equation}
	
	In fact, the expression \eqref{eq:kernel-explicit-proof} for the kernel gives
	\[
	\int_t^{+\infty} K(t,s)\frac{1}{(1+s)^2}\dd s
	=
	(1+t)^\ell
	\int_t^{+\infty} e^{-2\lambda\big[A(s)-A(t)\big]}(1+s)^{-\ell-2}\dd s.
	\]
	Then, since $A'(s)= a(s) = (1+s)^\ell$, the change of variable
	$
	y=A(s)-A(t)$ implies
	\[
	\int_t^{+\infty} K(t,s)\frac{1}{(1+s)^2}\dd s
	=
	(1+t)^\ell
	\int_0^{+\infty} e^{-2\lambda y}(1+s(y))^{-2\ell-2}\dd y,
	\]
	and we conclude \eqref{eq:kernel-est-1-proof} by noting that
	\[
	\big[1+s(y)\big]^{-2(\ell+1)}
	=
	\bigl[(1+t)^{\ell+1}+(\ell+1)y\bigr]^{-2}
	\le
	(1+t)^{-2\ell-2}.
	\]
	
	The proof of \eqref{eq:kernel-est-2-proof} is entirely analogous.
	
	\subsection{Fixed-point argument at infinity}

	Fix $T\ge1$, to be chosen sufficiently large, and let
	\[
	\mathrm{E}:=
	\Bigl\{
	\eta\in C([T,\infty)):\ 
	\norm{\eta}_{\mathrm{E}}:=
	\sup_{t\ge T} \ (1+t)^{\ell+2}\abs{\eta(t)}<\infty
	\Bigr\}.
	\]
	Moreover, for $M>0$ to be chosen later, define
	\[
	\mathcal B:=
	\Bigl\{
	\eta\in \mathrm{E}:\ 
	\eta(t)\le0, \text{ for } t\ge T, \text{ and } 
	\norm{\eta}_{\mathrm{E}}\le M
	\Bigr\}
	\] and
	\[
	(\mathcal T\eta)(t)
	:=
	-\int_t^{+\infty}
	K(t,s)\biggl(\eta(s)^2+\frac{q}{(1+s)^2}\biggr)\dd s.
	\]
	Clearly, $\mathcal T\eta\le0$ for all $\eta\in \mathrm{E}$.
	
	\smallskip
	
	First, we show how to choose $M > 0$ and $T\ge 1$ so that $\mathcal T$ maps $\mathcal B$ into itself. Indeed, if $\eta\in\mathcal B$, then
	\[
	\abs{\eta(s)}\le M(1+s)^{-\ell-2}
	\quad \text{and} \quad
	\eta(s)^2\le M^2(1+s)^{-2\ell-4}.
	\]
	By \eqref{eq:kernel-est-1-proof} and \eqref{eq:kernel-est-2-proof}, we obtain
	\begin{align*}
		\abs{(\mathcal T\eta)(t)}
		&\le
		q\int_t^{+\infty} K(t,s)\frac{1}{(1+s)^2}\dd s
		+
		M^2\int_t^{+\infty} K(t,s)\frac{1}{(1+s)^{2\ell+4}}\dd s  \\[0.4em]
		&\le
		\frac{1}{2 \lambda} \Big( q(1+t)^{-\ell-2}
		+
		M^2(1+t)^{-3\ell-4} \Big).
	\end{align*}
	Multiplying by $(1+t)^{\ell+2}$, the choice $M:= \tfrac{q}{\lambda} > 0$ yields
    \vspace{0.2em}
	\begin{equation*}
		\norm{\mathcal T\eta}_{\mathrm{E}}
		\le
		\frac{q}{2 \lambda}+ \frac{M^2T^{-2\ell-2}}{2 \lambda} = \frac{M}{2} \bigg( 1 + \frac{qT^{-2\ell-2}}{\lambda^{2}} \bigg).
	\end{equation*}
    Next, since $\ell+1>0$, we choose $T\ge1$ large enough so that
\begin{equation}\label{eqn:choiceTM}
    \frac{qT^{-2\ell-2}}{\lambda^{2}}\le \frac12
    \quad\text{or, equivalently,}\quad
    T^{2(\ell+1)}\ge \frac{2q}{\lambda^2}.
\end{equation}
	The choice of the factor $1/2$ is convenient and is also used
in the contraction estimate below. It follows that
	$
	\norm{\mathcal T\eta}_{\mathrm{E}}\le M$ 
	and that $\mathcal T$ maps $\mathcal B$ into itself, as claimed.
	
	\smallskip
	
	Now, 
	if $\eta_1,\eta_2\in\mathcal B$, then the definition of the norm implies 
	\[
	\abs{\eta_1(s)^2-\eta_2(s)^2}
	\le
	(\abs{\eta_1(s)}+\abs{\eta_2(s)})\abs{\eta_1(s)-\eta_2(s)}
	\le
	\dfrac{2q}{\lambda} \norm{\eta_1-\eta_2}_{\mathrm{E}}(1+s)^{-2\ell-4}.
	\]
	Thus, by  \eqref{eq:kernel-est-2-proof}, 
	\begin{align*}
		\abs{(\mathcal T\eta_1)(t)-(\mathcal T\eta_2)(t)}
		&\le
		\dfrac{2q}{\lambda}\norm{\eta_1-\eta_2}_{\mathrm{E}}
		\int_t^{+ \infty} K(t,s)(1+s)^{-2\ell-4}\dd s \\
		&\le
		\frac{q}{\lambda^{2}} \, \norm{\eta_1-\eta_2}_{\mathrm{E}}(1+t)^{-3\ell-4}.
	\end{align*}
	We multiply by $(1+t)^{\ell+2}$ and recall \eqref{eqn:choiceTM} to obtain
	\[
	\norm{\mathcal T\eta_1-\mathcal T\eta_2}_{\mathrm{E}}
	\le
	\frac{qT^{-2\ell-2}}{\lambda^{2}} \, \norm{\eta_1-\eta_2}_{\mathrm{E}} \le
	\frac12 \norm{\eta_1-\eta_2}_{\mathrm{E}}
	\] and
	it follows that $\mathcal T$ is a contraction on $\mathcal B$. Since $\mathrm E$ is a Banach space and $\mathcal B$ is a closed subset of $\mathrm E$, the contraction mapping theorem applies. Hence, there exists a unique fixed point $\eta\in\mathcal B$ satisfying \eqref{eq:eta-integral-proof} on $[T,+\infty)$.
	
	\subsection{Reconstruction of $m_\lambda$}
    
	The integrand in \eqref{eq:eta-integral-proof} is continuous in $(t,s)$ for $s\ge t$, and the estimates above provide an integrable dominating function on every compact $t$-interval in $[T,+\infty)$. Then, differentiation under the integral sign is justified. Since
	\[
	K(t,t)=1
	\quad \text{and} \quad
	\p_t K(t,s)=2b_0(t)K(t,s),
	\]
	we obtain
	\[
	\eta'(t)
	=
	\eta(t)^2+\frac{q}{(1+t)^2}
	-
	2b_0(t)\int_t^{+ \infty} K(t,s)\biggl(\eta(s)^2+\frac{q}{(1+s)^2}\biggr)\dd s.
	\]
	Using \eqref{eq:eta-integral-proof}, this becomes
	\[
	\eta'(t)
	=
	\eta(t)^2+\frac{q}{(1+t)^2}+2b_0(t)\eta(t).
	\]
	It follows that $\eta\in C^1([T,\infty))$ and, thus, it solves \eqref{eq:eta-riccati-proof} classically.
	
	\smallskip
	
	Now, for $t\ge T$, define
	\begin{equation}\label{eq:m-large-proof}
		m_\lambda(t)
		:=
		(1+t)^{-\ell/2}e^{-\lambda A(t)}
		\exp\,\biggl(-\int_T^t \eta(s)\dd s\biggr).
	\end{equation} Observe that $m_\lambda$ solves \eqref{eq:m-lam-ode} and $\eta$ solves \eqref{eq:eta-riccati-proof}; then, a direct computation shows that $m_\lambda$ satisfies \eqref{eq:m-eq} on $[T,\infty)$.
	Since $\ell+2>1$ and, by construction, $\eta\le0$ in $[T, + \infty)$ and
	\[
	\abs{\eta(s)}\le \dfrac{q}{\lambda} \, (1+s)^{-\ell-2},
	\]
	we have that $- \eta = \abs{\eta}$ is integrable in $[T, + \infty)$; more precisely, 
		\[
		0 \le - \int_T^{+ \infty} \eta(s) \dd s \le \dfrac{q}{\lambda} \int_T^{+ \infty} (1+s)^{-\ell-2} \dd s = \frac{q}{\lambda (\ell+1)}(1+T)^{-\ell-1};
		\] then, by \eqref{eqn:choiceTM},
		\[
		\frac{q}{\lambda (\ell+1)}(1+T)^{-\ell-1} \le \frac{q}{\lambda (\ell+1)}T^{-\ell-1} \le \frac{\sqrt q}{\sqrt2(\ell+1)}
		\] Thus, for all $t\ge T$,
		\[
		1
		\le
		\exp\Bigl(-\int_T^t \eta(s)\,\dd s\Bigr)
		\le
		\exp\left(\frac{\sqrt q}{\sqrt2(\ell+1)}\right).
		\] Since $a(t)^{-1/2}=(1+t)^{-\ell/2}$, this proves \eqref{eq:m-bounds} on $[T,\infty)$.
	
	Moreover, from the definition \eqref{eq:m-large-proof}, we have
	\[
	-\frac{m_\lambda'(t)}{m_\lambda(t)}
	=
	b_0(t)+\eta(t)
	=
	\lambda(1+t)^\ell+\frac{\ell}{2(1+t)}+\eta(t).
	\]
	Since both $(1+t)^{-1}=\mathrm{o}((1+t)^\ell)$ and $\eta(t)=\mathrm{O}((1+t)^{-\ell-2})=\mathrm{o}((1+t)^\ell)$ as $t\to\infty$, there exists $T_1\ge T$ such that, for all $t\ge T_1$,
	\[
	\frac{\lambda}{2}(1+t)^\ell
	\le
	-\frac{m_\lambda'(t)}{m_\lambda(t)}
	\le
	\frac{3\lambda}{2}(1+t)^\ell.
	\] 
	In particular,
	$
	m_\lambda'(t)<0$, for all $t\ge T_1$.
	Now, multiply by $m_\lambda(t)$ and use \eqref{eq:m-bounds} to obtain \eqref{eq:mprime-bounds} on $[T_1,\infty)$. 
	
	\subsection{Conclusion}

	Solve the linear ODE \eqref{eq:m-eq} backward on $[0,T_1]$ with terminal data
	\[
	m_\lambda(T_1)>0
	\quad \text{and} \quad
	m_\lambda'(T_1)<0.
	\]
	By uniqueness for linear ODEs, this defines a function
	$
	m_\lambda\in C^2([0,\infty))$
	which coincides with function the previously constructed on $[T_1,\infty)$ and satisfies \eqref{eq:m-eq} on the whole half-line.
	Set
	\[
	V_\lambda(t):=\lambda^2(1+t)^{2\ell}-\frac{\mu^2}{(1+t)^2}.
	\]
	Since $\lambda>|\mu|$ and $\ell>-1$, we have, for all $t\ge0$,
	\[
	V_\lambda(t)
	=
	\frac{\lambda^2(1+t)^{2(\ell+1)}-\mu^2}{(1+t)^2}
	\ge
	\frac{\lambda^2-\mu^2}{(1+t)^2}
	>0.
	\]
	Consider
	$P(t):=m_\lambda(t)m_\lambda'(t)$.
	By \eqref{eq:m-eq}, for all $t\ge0$,
	\[
	P'(t)=m_\lambda'(t)^2+m_\lambda(t)m_\lambda''(t)
	=
	m_\lambda'(t)^2+V_\lambda(t)m_\lambda(t)^2
	>0.
	\]
	Hence, $P$ is strictly increasing. Since
	$P(T_1)=m_\lambda(T_1)m_\lambda'(T_1)<0$,
	it follows that, for all $t\in[0,T_1]$,
	\[
	P(t)<P(T_1)<0.
	\]
	In particular, $P(t)\neq0$ on $[0,T_1]$. If $m_\lambda(t_0)=0$ for some $t_0\in[0,T_1]$, then $P(t_0)=0$, a contradiction. Hence, $m_\lambda$ does not vanish on $[0,T_1]$. Since $m_\lambda(T_1)>0$, continuity implies
	\[
	m_\lambda(t)>0,
	\quad\text{for all }t\in[0,T_1].
	\]
	Because $P(t)=m_\lambda(t)m_\lambda'(t)<0$ and $m_\lambda(t)>0$, we conclude that
	\[
	m_\lambda'(t)<0,
	\quad\text{for all }t\in[0,T_1].
	\]
	Finally, taking $t_0:=T_1$, all the asserted properties follow.

\end{document}